\begin{document}
	
	\newcommand{\mmbox}[1]{\mbox{${#1}$}}
	\newcommand{\proj}[1]{\mmbox{{\mathbb P}^{#1}}}
	\newcommand{\Cr}{C^r(\Delta)}
	\newcommand{\CR}{C^r(\hat\Delta)}
	\newcommand{\affine}[1]{\mmbox{{\mathbb A}^{#1}}}
	\newcommand{\Ann}[1]{\mmbox{{\rm Ann}({#1})}}
	\newcommand{\caps}[3]{\mmbox{{#1}_{#2} \cap \ldots \cap {#1}_{#3}}}
	\newcommand{\Proj}{{\mathbb P}}
	\newcommand{\N}{{\mathbb N}}
	\newcommand{\Z}{{\mathbb Z}}
	\newcommand{\R}{{\mathbb R}}
	\newcommand{\A}{{\mathcal{A}}}
	\newcommand{\Tor}{\mathop{\rm Tor}\nolimits}
	\newcommand{\Ext}{\mathop{\rm Ext}\nolimits}
	\newcommand{\Hom}{\mathop{\rm Hom}\nolimits}
	\newcommand{\im}{\mathop{\rm Im}\nolimits}
	\newcommand{\rank}{\mathop{\rm rank}\nolimits}
	\newcommand{\supp}{\mathop{\rm supp}\nolimits}
	\newcommand{\arrow}[1]{\stackrel{#1}{\longrightarrow}}
	\newcommand{\CB}{Cayley-Bacharach}
	\newcommand{\coker}{\mathop{\rm coker}\nolimits}
	\newcommand{\fm}{\mathfrak m}
	\newcommand{\rees}{{R[It]}}
	\newcommand{\fiber}{{\mathcal{F}}(I)}
	\sloppy
	\theoremstyle{plain}
	
	\newtheorem*{thm*}{Theorem}
	\newtheorem{defn0}{Definition}[section]
	\newtheorem{prop0}[defn0]{Proposition}
	\newtheorem{quest0}[defn0]{Question}
	\newtheorem{thm0}[defn0]{Theorem}
	\newtheorem{lem0}[defn0]{Lemma}
	\newtheorem{corollary0}[defn0]{Corollary}
	\newtheorem{example0}[defn0]{Example}
	\newtheorem{remark0}[defn0]{Remark}
	\newtheorem{conj0}[defn0]{Conjecture}
	
	\newenvironment{defn}{\begin{defn0}}{\end{defn0}}
	\newenvironment{conj}{\begin{conj0}}{\end{conj0}}
	\newenvironment{prop}{\begin{prop0}}{\end{prop0}}
	\newenvironment{quest}{\begin{quest0}}{\end{quest0}}
	\newenvironment{thm}{\begin{thm0}}{\end{thm0}}
	\newenvironment{lem}{\begin{lem0}}{\end{lem0}}
	\newenvironment{cor}{\begin{corollary0}}{\end{corollary0}}
	\newenvironment{exm}{\begin{example0}\rm}{\end{example0}}
	\newenvironment{rem}{\begin{remark0}\rm}{\end{remark0}}
	
	\newcommand{\defref}[1]{Definition~\ref{#1}}
	\newcommand{\conjref}[1]{Conjecture~\ref{#1}}
	\newcommand{\propref}[1]{Proposition~\ref{#1}}
	\newcommand{\thmref}[1]{Theorem~\ref{#1}}
	\newcommand{\lemref}[1]{Lemma~\ref{#1}}
	\newcommand{\corref}[1]{Corollary~\ref{#1}}
	\newcommand{\exref}[1]{Example~\ref{#1}}
	\newcommand{\secref}[1]{Section~\ref{#1}}
	\newcommand{\remref}[1]{Remark~\ref{#1}}
	\newcommand{\questref}[1]{Question~\ref{#1}}
	
	\newcommand{\std}{Gr\"{o}bner}
	\newcommand{\jq}{J_{Q}}
	
	\title{Logarithmic derivations associated to line arrangements}
	\author{Ricardo Burity and \c{S}tefan O. Toh\v{a}neanu}

	\subjclass[2010]{Primary 52C30; Secondary 52C35, 05A99, 05C30, 13D02} \keywords{hyperplane arrangements, logarithmic derivation, syzygy. \\
		\indent The first author was partially supported by CAPES - Brazil (grant: PVEX - 88881.336678/2019-01). \\
		\indent Burity's address: Departamento de Matem\'atica, Universidade Federal da Paraiba, J. Pessoa, Para\'iba, 58051-900, Brazil,
		Email: ricardo@mat.ufpb.br.\\
		\indent Tohaneanu's address: Department of Mathematics, University of Idaho, Moscow, Idaho 83844-1103, USA, Email: tohaneanu@uidaho.edu.}

	\begin{abstract}
		In this paper we give full classification of rank 3 line arrangements in $\mathbb P^2$ (over a field of characteristic 0) that have a minimal logarithmic derivation of degree 3. The classification presents their defining polynomials, up to a change of variables, with their corresponding affine pictures. We also analyze the shape of such a logarithmic derivation, towards obtaining criteria for a line arrangement to possess a cubic minimal logarithmic derivation.
	\end{abstract}
	
	\maketitle

\section{Introduction}

Let $\mathcal{A}=\{H_1,\ldots,H_s\}$ be a central hyperplane arrangement in $V=\mathbb{K}^n,$ where $\mathbb{K}$ is a field of characteristic zero. If $x_1,\ldots,x_n$ is a basis for $V^*$, then $H_i=V(l_i)$, $i=1,\ldots,s,$ for some linear forms $l_i$ in $S:=\mathrm{Sym}(V^*)=\mathbb{K}[x_1,\ldots,x_n].$

The module of the derivations of $S$ has the structure $\mathrm{Der}(S)=\oplus_{i=1}^{n}S\frac{\partial}{\partial{x_i}}$, that is, it is free of rank $n$. A \emph{logarithmic derivation} of $\mathcal{A}$ is an element $\theta \in \mathrm{Der}(S)$, such that $\theta(l_i) \in \langle l_i \rangle$, for all $i= 1,\ldots, s.$ The set of the logarithmic derivations forms an $S$-module, usually detonated by $\mathrm{Derlog}(\mathcal{A})$; the standard grading of $S$ induces a graded module structure on this $S$-module. Therefore, by the structure of the module of the derivations we can write $\theta = \sum_{i=1}^{n}P_i\frac{\partial}{\partial{x_i}}$ with $P_i$ homogeneous polynomials of the same degree, so we can define $\mathrm{deg}(\theta)=\mathrm{deg}(P_i)$.

Consider $F:=\Pi_{i=1}^{s}l_i$ the defining polynomial of $\mathcal{A}$. Since $F$ is a homogeneous polynomial the structure of module of the logarithmic derivations is well known, $$\mathrm{Derlog}(\mathcal{A})= \mathrm{Syz}(J_F)\oplus S\theta_E,$$ where $\mathrm{Syz}(J_F)$ is the first module of the syzygies on the Jacobian ideal of $F$, that is, the ideal of $S$ generated by the partial derivatives of the $F$ and $\theta_E=\sum_{i=1}^{n}x_i\frac{\partial}{\partial{x_i}}$ is the Euler derivation. In general, when $\mathrm{Syz}(J_F)$ is free, $V(F)$ is called a \emph{free divisor} (and therefore the hyperplane arrangement $\mathcal A$ is called {\em free}).

Maybe one of the most important conjecture in the field of hyperplane arrangement is Terao's Conjecture which states that if two hyperplane arrangements have isomorphic intersection lattices, then if one is free, then so is the other. So a deeper understanding of the generators of $\mathrm{Syz}(J_F)$ is of utmost importance to tackle this conjecture: one would like to describe these generators, or their degrees, only from the combinatorics of $\mathcal A$, if possible. In the past 5-6 years, there has been a lot of work on the {\em minimal degree of a Jacobian relation}, which is the minimal degree of a syzygy of $J_F$ (and here $V(F)$ can be any divisor, not necessarily a hyperplane arrangement):

$$r(\mathcal A)=mdr(\mathcal A):= \min_{r\in\mathbb Z}\{r|(\mathrm{Syz}(J_F))_r\neq 0\}.$$

Finding degrees of syzygies (and more generally, the shapes of the graded minimal free resolutions) has been one of the most fundamental topics in commutative/homological algebra; and the literature about this is extensive, being impossible to list it all here without leaving out some of it. Same is true also if one is interested in syzygies of Jacobian ideals of divisors (and therefore analyzing their singular locus), with a particular focus on plane projective divisors. Specifically to line arrangements in $\mathbb P^2$ (i.e., central rank 3 hyperplane arrangements in $\mathbb K^3$), we must mention the work of Dimca, Dimca-Sticlaru, and their coauthors: most of the time starting from results in \cite{duWa} where the invariant {\em mdr} seems to show up for the first time in the context of our project, they find connections between this invariant and the Tjurina number of a projective plane divisor (i.e., the degree of the Jacobian ideal), which in turn give restrictions on constructing line arrangements with prescribed combinatorics: \cite{TaDiSti,Di, DiSti, DiSti2}, and the citations therein.

Our goal is the following: in the spirit of \cite{To}, we will classify all line arrangements in $\mathbb P^2$ with $r(\mathcal A)=3$, also analyzing the shape of the corresponding cubic logarithmic derivation towards finding some structures in this shape. For the theoretical results we found essential the addition-deletion results for the minimal degree of logarithmic derivations of arrangements in \cite{TaDiSti}. As an appendix, at the end we briefly look at this invariant for graphic arrangements.

Below we summarize the results in \cite{TaDiSti}; for more definitions and notations clarifications, we refer the reader to \cite{OrTe}. Let $\mathcal A$ be a finite collection of (linear) hyperplanes in $\mathbb K^{\ell}$ with $\cap_{H\in\mathcal A}H=\{0\}$ (i.e., we say that $\mathcal A$ is central essential hyperplane arrangement of rank $\ell$). For $H\in\mathcal A$, let $$\mathcal A''=\mathcal A^{H}:=\{H\cap L|L\in\mathcal A\setminus\{H\}\}$$ be the {\em restriction}, and let $$\mathcal A':=\mathcal A\setminus\{H\}$$ be the {\em deletion}. Also, denote
$$r:=r(\mathcal A),\, r':=r(\mathcal A'), \, r'':=r(\mathcal A'').$$

\begin{thm} \label{additiondeletion} Let $\ell\geq 2$. With the above notations we have:
\begin{itemize}
  \item[(1)] \cite[Proposition 2.12]{TaDiSti}:
  \begin{itemize}
    \item[(1a)] $r'\leq r\leq r'+1$.
    \item[(1b)] If $|\mathcal A|-|\mathcal A''|>r$, then $r'=r$.
    \item[(1c)] If $|\mathcal A'|-|\mathcal A''|>r'$, then $r=r'$.
  \end{itemize}
  \item[(2)] \cite[Theorem 2.14]{TaDiSti}: If $r'<r''$, then $r=r'+1$.
  \item[(3)] \cite[Proposition 2.15]{TaDiSti}: If $r=r'$, then $r''\leq r$.
\end{itemize}
\end{thm}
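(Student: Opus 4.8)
The plan is to run all five parts through the addition--deletion short exact sequence of graded $S$-modules attached to the triple. Let $\alpha=\alpha_H$ be a linear form defining $H$ and $S''=S/(\alpha)$ the coordinate ring of $H$. Reduction of coefficients modulo $\alpha$ yields an $S$-linear restriction map $\rho\colon\mathrm{Derlog}(\mathcal A)\to\mathrm{Derlog}(\mathcal A'')$ with kernel $\alpha\cdot\mathrm{Derlog}(\mathcal A')$, so that
\[
0\longrightarrow \mathrm{Derlog}(\mathcal A')(-1)\xrightarrow{\ \cdot\alpha\ }\mathrm{Derlog}(\mathcal A)\xrightarrow{\ \rho\ }\mathrm{Derlog}(\mathcal A'')
\]
is exact (the graded form of the key sequence of \cite[Chapter~4.3]{OrTe}). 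I will combine this with two elementary facts: $\mathrm{Derlog}(\mathcal A)\subseteq\mathrm{Derlog}(\mathcal A')$ as graded submodules of $\mathrm{Der}(S)$; and, since $\mathrm{Derlog}(\mathcal B)=\mathrm{Syz}(J_G)\oplus S\theta_E$ for any central $\mathcal B$ with defining polynomial $G$, the identity $r(\mathcal B)=\min\{\deg\theta:\theta\in\mathrm{Derlog}(\mathcal B)\setminus S\theta_E\}$ together with the fact that $\alpha\theta\notin S\theta_E$ whenever $\theta\notin S\theta_E$. Part (1a) then comes for free: $r'\le r$ because a minimal non-Euler element of $\mathrm{Derlog}(\mathcal A)$ is a non-Euler element of $\mathrm{Derlog}(\mathcal A')$ of the same degree, and $r\le r'+1$ because $\alpha\eta\in\ker\rho\subseteq\mathrm{Derlog}(\mathcal A)$ is non-Euler of degree $r'+1$ as soon as $\eta\in\mathrm{Derlog}(\mathcal A')$ is non-Euler of degree $r'$.

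I would deduce (2) and (3) from one common ``Euler-lifting'' step. Choose a non-Euler $\theta\in\mathrm{Derlog}(\mathcal A)$ of degree $r$ (for (2), after first assuming $r=r'$, so that $\deg\theta=r'$), and consider $\rho(\theta)\in\mathrm{Derlog}(\mathcal A'')$. If $\rho(\theta)\notin S''\theta_E''$, then $r''\le\deg\theta$; this is already the conclusion of (3), and it contradicts $r'<r''$ in case (2). Otherwise $\rho(\theta)=\bar P\,\theta_E''$; lifting $\bar P$ to $P\in S$ and using $\rho(\theta_E)=\theta_E''$, we get $\theta-P\theta_E\in\ker\rho$, say $\theta-P\theta_E=\alpha\eta$ with $\deg\eta=\deg\theta-1$. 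In both situations $\deg\theta-1<r(\mathcal A')$, which forces $\eta\in S\theta_E$ and hence $\theta\in S\theta_E$, contradicting the choice of $\theta$. Thus in (2) the case $r=r'$ is excluded and $r=r'+1$ by (1a), and in (3) $\rho(\theta)$ must be non-Euler, giving $r''\le r$.

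For (1b) and (1c), (1a) reduces both to a single claim: if $r=r'+1$, then $|\mathcal A|-|\mathcal A''|\le r$, equivalently $\sum_{X\in\mathcal A''}(m(X)-2)\le r'$, where $m(X)$ is the number of members of $\mathcal A$ containing the codimension-two flat $X$ (the sum equals $|\mathcal A'|-|\mathcal A''|$, by counting each $L\in\mathcal A'$ in the class $X=H\cap L$, whose fibre has $m(X)-1$ members). When $r=r'+1$, a non-Euler $\eta\in\mathrm{Derlog}(\mathcal A')$ of degree $r'$ cannot lie in $\mathrm{Derlog}(\mathcal A)$; being logarithmic along every member of $\mathcal A\setminus\{H\}$, its only failure is $g:=\eta(\alpha)\notin(\alpha)$, so $\bar g:=\overline{\eta(\alpha)}\in S''$ is a nonzero form of degree $r'$. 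The crux is a local lemma: for each $X\in\mathcal A''$, with $\bar\alpha_X\in S''$ the linear form cutting $X$ out of $H$, one has $\bar\alpha_X^{\,m(X)-2}\mid\bar g$. Granting it, since the $\bar\alpha_X$ are pairwise non-proportional, $\prod_{X}\bar\alpha_X^{\,m(X)-2}$ divides $\bar g\ne 0$, whence $\sum_X(m(X)-2)\le\deg\bar g=r'$, as desired.

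I expect that local lemma to be the main obstacle. The plan is to work at the generic point of $X$ in coordinates with $\alpha=s$ and $\bar\alpha_X=t$, so that the $m(X)-1$ members of $\mathcal A'$ through $X$ are $\{a_is+t=0\}$ for distinct scalars $a_i$. Writing $\eta=P\,\partial_s+Q\,\partial_t+(\cdots)$ (so $\eta(\alpha)=P$ and $\bar g$ is the reduction $P_0$ of $P$ modulo $s$), the conditions $a_iP+Q\in(a_is+t)$ involve only $P$ and $Q$; expanding $P=\sum_k s^kP_k$, $Q=\sum_k s^kQ_k$ and matching coefficients produces, at each order $k$, a polynomial in an indeterminate $a$ of degree $\le k+1$ whose leading coefficient is $\pm P_0/t^{\,k}$ and which vanishes modulo $(t)$ at the $m(X)-1$ points $a_i$. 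Since $S''/(\bar\alpha_X)$ is a domain, for $k\le m(X)-3$ this pushes $t$ into the leading coefficient, and the resulting induction on $k$ yields $t^{\,m(X)-2}\mid P_0=\bar g$. Carrying out this induction carefully --- keeping the divided remainders polynomial and controlling the exact degree in $a$ at each stage --- is the delicate point; the remainder of the argument is formal manipulation with the exact sequence.
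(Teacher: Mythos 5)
The paper never proves this theorem: it is quoted verbatim from Abe--Dimca--Sticlaru, so there is no in-house argument to compare against, and your proposal has to be judged on its own. It is correct, and it follows the standard route (the graded restriction sequence $0\to \mathrm{Derlog}(\mathcal A')(-1)\to\mathrm{Derlog}(\mathcal A)\to\mathrm{Derlog}(\mathcal A'')$ of Orlik--Terao, Ch.~4.3, which is also what the cited source rests on). Parts (1a), (2), (3) are handled exactly right --- including the point, made in the paper itself, that (2) and (3) are contrapositives of one another modulo (1a) --- and your reduction of (1b)/(1c) to ``$r=r'+1$ implies $\sum_X(m(X)-2)\le r'$'' is the right move. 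The local lemma you flag as the main obstacle is true and your induction does close; here is a packaging that avoids tracking the divided remainders. With $\alpha_H=s$, $\bar\alpha_X=t$, remaining coordinates $u$, and the members of $\mathcal A'$ through $X$ cut out by $t-c_is$ with $c_1,\dots,c_{m-1}$ distinct ($m=m(X)$), set $\phi(c):=\bigl(Q-cP\bigr)(s,cs,u)\in\mathbb K[s,u][c]$; the logarithmic conditions say $\phi(c_i)=0$ for all $i$, so the monic polynomial $\prod_i(c-c_i)$ of degree $m-1$ divides $\phi$, say $\phi=\prod_i(c-c_i)\,\psi$. Writing $P=\sum_k t^kA_k(s,u)$ and $Q=\sum_k t^kB_k(s,u)$, the coefficient of $c^j$ in $\phi$ is $s^{j-1}(sB_j-A_{j-1})$, so every coefficient of index $\ge m-1$ is divisible by $s^{m-2}$; a top-coefficient argument then forces $\psi\equiv 0\ (\mathrm{mod}\ s^{m-2})$, hence all coefficients of $\phi$ are divisible by $s^{m-2}$, and for $1\le j\le m-2$ this yields $s\mid A_{j-1}$, i.e.\ $t^{m-2}\mid P(0,t,u)=\overline{\eta(\alpha_H)}$, which is your divisibility $\bar\alpha_X^{\,m(X)-2}\mid\bar g$. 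The only loose ends worth recording are degenerate cases --- $\ell=2$, where $r''$ must be read as $+\infty$ since $\mathrm{Derlog}(\mathcal A'')=S''\theta_E''$, and flats with $m(X)=2$, where the lemma is vacuous --- neither of which disturbs the argument.
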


Observe that the statement (3) is the counterpositive of (2), combined with (1a), so basically they are the same. But the flavors are different: (2) is the ``addition-deletion'' part, meaning that if we have information about $r'$ and $r''$, then we will obtain some information about $r$; whereas (3) is the ``restriction'' part, meaning that if we have information about $r$ and $r'$, then we will obtain information about $r''$.

\begin{rem}\label{upperbound} Let $\mathcal A\subset\mathbb P^{\ell-1}$ be a hyperplane arrangement of rank $\ell$. Let $X$ be a coatom, meaning that $X$ is a flat of rank $\ell-1$ in the lattice of intersection of $\mathcal A$. The {\em closure of $X$} is $cl(X):=\{H\in\mathcal A| X\in H\}$, i.e., the set of hyperplanes of $\mathcal A$ that contain $X$. The {\em multiplicity of $X$} is $\nu(X):=|cl(X)|$, and define $M=M(\mathcal A):=\max\{\nu(X)|X \mbox{ coatom}\}$.

Suppose $|\mathcal A|=s$ and suppose $M\leq s-2$. Then, using the same (classical) trick as in the proof of \cite[Theorem 1.2]{Di}, we have $$r(\mathcal A)\leq s-M.$$

Here is how: let $X$ be a coatom with $\nu(X)=M$. We can suppose $X=[0,\ldots,0,1]$. Then, the defining polynomial of $\mathcal A$ is $F=gh$, where $g,h\in S:=\mathbb K[x_1,\ldots,x_{\ell}]$ are products of the defining linear forms of the hyperplanes of $\mathcal A$, with $g(X)=0$ and $h(X)\neq 0$; and so $\deg(g)=M$ and $\deg(h)=s-M$.

For any homogeneous polynomial $P\in S$, denote $\displaystyle P_{x_i}:=\frac{\partial P}{\partial x_i}$. Then, $F_{x_{\ell}}=g\cdot h_{x_{\ell}}$. Together with Euler's formula $sF=x_1F_{x_{1}}+\cdots+x_{\ell-1}F_{x_{\ell-1}}+x_{\ell}F_{x_{\ell}}$, we get the Jacobian relation

$$(x_1h_{x_{\ell}})F_{x_{1}}+\cdots+(x_{\ell-1}h_{x_{\ell}})F_{x_{\ell-1}}+(x_{\ell}h_{x_{\ell}}-sh)F_{x_{\ell}}=0.$$ The only issue that can occur here is that the logarithmic derivation corresponding to this syzygy may be a multiple of Euler's derivation. If that were the case, since $\deg(h)=s-M\geq 2$, then there should exist a linear factor dividing both $h_{x_{\ell}}$ and $h$; an obvious contradiction.

\end{rem}

\begin{rem}\label{lowerbound} Same as above, let $\mathcal A\subset\mathbb P^{\ell-1}$ be a hyperplane arrangement of rank $\ell$. Define $0-Sing(\mathcal A):=\{X|X\mbox{ coatom}\}$ to be the set of coatoms of $\mathcal A$. This is a (reduced) set of points in $\mathbb P^{\ell-1}$. We define $\alpha_0(\mathcal A)$ to be the minimal degree of a hypersurface containing $0-Sing(\mathcal A)$. Then $$\alpha_0(\mathcal A)-1\leq r(\mathcal A).$$

To see this, we use the same trick as in \cite[Proposition 3.6]{To0}. Let $\displaystyle\theta=P_1\frac{\partial}{\partial x_1}+\cdots+P_{\ell} \frac{\partial}{\partial x_{\ell}}$ be a logarithmic derivation of degree $r(\mathcal A)$, that is not a multiple of the Euler derivation.

Let $X$ be a coatom, and suppose $X=H_1\cap\cdots\cap H_{\ell-1}$, for some $H_i\in\mathcal A$. So $X=[a_1,\ldots,a_\ell]$, and if $H_i=V(l_i),i=1,\ldots,\ell-1$, then $l_i(X)=0$, for all $i$. Suppose $l_i=c_i^1x_1+\cdots+c_i^{\ell}x_{\ell}, c_i^j\in \mathbb K$. Then, for each $i=1,\ldots,\ell-1$, $\theta(l_i)\in\langle l_i\rangle$, which evaluated at $X$ gives:

$$c_i^1P_1(X)+\cdots+c_i^{\ell}P_{\ell}(X)=0.$$

Either $P_j(X)=0$, for all $j=1,\ldots,\ell$, or the point $[P_1(X),\ldots,P_{\ell}(X)]\in V(l_1,\ldots,l_{\ell-1})$. In the later case, we must have that $[P_1(X),\ldots,P_{\ell}(X)]=X$ as projective points, hence $$P_j(X)=qa_j, j=1,\ldots,\ell, q\in\mathbb K\setminus\{0\}.$$ This means that for any $1\leq j<k\leq \ell$, $(x_kP_j-x_jP_k)(X)=0$. Since $x_kP_j-x_jP_k$ cannot be the zero polynomial for all $j<k$ (since $\theta$ is not a multiple of $\theta_E$), put together the two cases we have that all coatoms $X$ belong to a hypersurface of degree $\leq r(\mathcal A)+1$.
\end{rem}

\section{Line arrangements with cubic minimal log derivations}

Let us put the general concepts in our perspective. Let $\mathcal{A}=\{V(l_1),\ldots,V(l_s)\}\subset \mathbb{P}^2$ be a line arrangement of rank 3. So $V=\mathbb K^3$, and if $x,y,z$ is a basis for $V^*$, $l_1,\ldots,l_s$ are linear forms in $S:={\rm Sym}(V^*)=\mathbb K[x,y,z]$, with three of them being linearly independent. Also let $F=l_1\cdots l_s$ be the defining polynomial of $\mathcal A$.

{\em The singular locus} of $\mathcal A$, denoted $Sing(\mathcal A)$, is the set of the intersection points of lines of $\mathcal A$. For $P\in Sing(\mathcal A)$, {\em the multiplicity} of $P$, denoted $m_P$, is the number of lines of $\mathcal A$ intersecting at $P$. Combinatorially, $Sing(\mathcal A)$ is the set of rank 2 flats in the intersection lattice of $\mathcal A$, and $m_P=\nu(P)=\mu(P)+1$, where $\mu(P)$ is the value of the M\"{o}bius function at $P$. Also, denote $$m=m(\mathcal A):=\max\{m_P|P\in Sing(\mathcal A)\}.$$ Note that for a line arrangement in $\mathbb P^2$, $m(\mathcal A)$ and $M(\mathcal A)$ defined in the previous section are the same.

There are two very important formulas (with immediate inductive proofs on $s\geq 2$):

$$(\star) \hspace{2cm} \mbox{If } H\in \mathcal A,\mbox{ then }\sum_{P\in Sing(\mathcal A)\cap H}(m_P-1)=s-1,$$ and $$(\star\star) \hspace{2.5cm}\sum_{P\in Sing(\mathcal A)}{{m_P}\choose{2}}={{s}\choose{2}}.\hspace{2.7cm}$$

For $H\in\mathcal A$, we will denote $$|H|:=|\mathcal A''|=|Sing(\mathcal A)\cap H|,$$ the number of singularities of $\mathcal A$ lying on the line $H$. With this in mind we have the following crucial result:

\begin{thm} \label{lines} Let $\mathcal A$ be a line arrangement in $\mathbb P^2$ of rank 3, and let $H\in\mathcal A$. Then, using the notations above, we have:
\begin{itemize}
  \item[(i)] \cite[Theorem 3.3]{TaDiSti}: If $|H|\geq r'+2$, then $r=r'+1$.
  \item[(ii)] \cite[Theorem 3.4]{TaDiSti}: If $|H|\geq r+2$, then $r'=r-1$.
\end{itemize}
\end{thm}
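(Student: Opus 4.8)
The plan is to reduce the entire statement, via Theorem~\ref{additiondeletion}, to the single numerical identity
$$r'' = r(\mathcal{A}^{H}) = |H| - 1,$$
valid for any rank-$3$ line arrangement $\mathcal{A}$ and any $H \in \mathcal{A}$; once this is in hand, (i) and (ii) drop out immediately.

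First I would note that $|H| \ge 2$: if a single singular point of $\mathcal{A}$ lay on $H$, then every line of $\mathcal{A} \setminus \{H\}$ would pass through it, forcing all of $\mathcal{A}$ through that point and contradicting $\rank(\mathcal{A}) = 3$. Next, $\mathcal{A}'' = \mathcal{A}^{H}$ is a central essential rank-$2$ arrangement: it is the set of $|H|$ distinct points $\{H \cap L : L \in \mathcal{A}\setminus\{H\}\}$ on $H \cong \mathbb{P}^1$, equivalently $|H|$ pairwise non-proportional linear forms $\ell_i$ in two variables $u,v$, with reduced defining form $F'' = \prod_i \ell_i$ of degree $|H|$. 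A short computation with the Euler relation (using that the $\ell_i$ are pairwise non-proportional and $\operatorname{char}\mathbb{K}=0$) shows that $F''_u$ and $F''_v$ have no common zero, hence form a regular sequence in $\mathbb{K}[u,v]$; therefore $\mathrm{Syz}(J_{F''})$ is free of rank one, generated by the Koszul syzygy $(F''_v, -F''_u)$ of degree $|H| - 1$. Equivalently, rank-$2$ arrangements are free with exponents $(1, |H|-1)$ and $\mathrm{Derlog}(\mathcal{A}'') = \mathrm{Syz}(J_{F''}) \oplus S\theta_E$. Either way, $r'' = |H| - 1$.

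Now (i): if $|H| \ge r' + 2$, then $r'' = |H| - 1 \ge r' + 1 > r'$, so $r' < r''$, and Theorem~\ref{additiondeletion}(2) gives $r = r' + 1$. And (ii): if $|H| \ge r + 2$, then $r'' = |H| - 1 \ge r + 1 > r$; since $r'' > r$ rules out $r = r'$ by Theorem~\ref{additiondeletion}(3), we get $r \ne r'$, whence Theorem~\ref{additiondeletion}(1a) forces $r = r' + 1$, i.e.\ $r' = r - 1$.

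No step here is genuinely hard; the only thing needing a little care is the degenerate case where the deletion $\mathcal{A}'$ fails to be essential (equivalently $r' = 0$, i.e.\ $\mathcal{A}$ is a near-pencil), since Theorem~\ref{additiondeletion} is phrased for triples in a fixed ambient space --- but there both conclusions are easily checked by hand, and the relevant strict inequalities $r'' > r'$ (resp.\ $r'' > r$) persist regardless. All the content is in the identity $r'' = |H| - 1$: this is exactly where the hypothesis ``rank-$3$ line arrangement'' enters, making the restriction one-dimensional and hence free with completely predictable exponents, so that the abstract addition--deletion/restriction dichotomy collapses to a comparison of $|H|$ with $r$ and $r'$.
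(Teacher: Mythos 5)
Your argument is correct and is exactly the route the paper indicates: it remarks that Theorem~\ref{lines} is a corollary of Theorem~\ref{additiondeletion} via the single fact $r''=|H|-1$, which you establish (via freeness of rank-$2$ arrangements with exponents $(1,|H|-1)$) and then combine with parts (2), (3), and (1a) precisely as intended. Nothing further is needed.
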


The above theorem is a corollary to Theorem \ref{additiondeletion}: its proof uses the fact that $r''=|H|-1$. Based on the same fact we have

\begin{cor}\label{cor_lower} Let $\mathcal A\subset \mathbb P^2$ be a line arrangement of rank 3, with $|\mathcal A|=s$. Let $r:=r(\mathcal A)$ and $m:=m(\mathcal A)$. Then
$$r=s-m \mbox{ or } r\geq \min\{|\mathcal A^H|\, |\, H\in\mathcal A\}-1.$$
\end{cor}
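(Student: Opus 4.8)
The plan is to split into two cases according to whether $r = s - m$ holds. If it does, we are done, so assume $r \neq s-m$. The goal is then to produce, for \emph{every} line $H \in \mathcal{A}$, the inequality $r \geq |\mathcal{A}^H| - 1$, which gives $r \geq \min\{|\mathcal{A}^H| \mid H \in \mathcal{A}\} - 1$. Fix an arbitrary $H \in \mathcal{A}$ and recall the key fact (noted right after \thmref{lines}) that for the restriction to a line one has $r'' = r(\mathcal{A}^H) = |\mathcal{A}^H| - 1$; this is because $\mathcal{A}^H$ is an essential rank-$2$ arrangement, i.e. a collection of $|\mathcal{A}^H|$ points on the line $H \cong \mathbb{P}^1$, whose defining polynomial in two variables has $mdr$ equal to one less than the number of points.

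Now I would invoke \thmref{additiondeletion}(2) in counterpositive form (equivalently part (3)): applied to the pair (deletion $\mathcal{A}' = \mathcal{A}\setminus\{H\}$, restriction $\mathcal{A}'' = \mathcal{A}^H$), part (2) says that if $r' < r''$ then $r = r'+1$; contrapositively, if $r \neq r'+1$ then $r'' \leq r'$, and since by (1a) $r' \leq r$, this yields $r'' \leq r$, i.e. $|\mathcal{A}^H| - 1 \leq r$, which is exactly what we want for this $H$. So the only lines $H$ that could fail the desired inequality are those for which $r = r' + 1$. The task reduces to showing: \emph{if $r = r'(H) + 1$ for some $H$ and $|\mathcal{A}^H| - 1 > r$, then $r = s - m$.}

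For that remaining case I would exploit \thmref{lines}(ii): if $|H| = |\mathcal{A}^H| \geq r+2$ then $r' = r - 1$, which is consistent with $r = r'+1$; and the hypothesis $|\mathcal{A}^H| - 1 > r$ is precisely $|H| \geq r+2$. So suppose such an $H$ exists. I expect the cleanest route is to use the upper bound of \remref{upperbound}, $r \leq s - M$ (valid once $M \leq s-2$), together with a lower-bound argument forcing equality. Here $M = \max_P m_P = m$ (coatoms of a rank-$3$ arrangement are exactly the singular points, with $\nu(P) = m_P$). One direction, $r \leq s - m$, is essentially \remref{upperbound}; the task is to show $r \geq s - m$ under the standing hypotheses. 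The natural tool is formula $(\star)$: choosing a point $P$ with $m_P = m$ and a line $H_0$ through $P$, we get $\sum_{Q \in Sing(\mathcal{A}) \cap H_0}(m_Q - 1) = s - 1$, so $|\mathcal{A}^{H_0}| \geq 1 + (s-1)/(m-1)$, and more usefully, on a line $H$ not through any high-multiplicity point the count $|\mathcal{A}^H|$ is large. The plan is to combine the existence of a line $H$ with $|\mathcal{A}^H| \geq r+2$ (our hypothesis) with the deletion step $r' = r-1$ and iterate, peeling off lines, until we either reach a contradiction with \thmref{lines}(i) or are forced into the maximal-multiplicity pencil configuration where $r = s - m$ by a direct computation on that configuration.

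I expect the main obstacle to be exactly this last case analysis: translating "$|\mathcal{A}^H| \geq r+2$ for some $H$ but $r \neq s-m$" into a contradiction. The delicate point is that \thmref{lines}(i) and (ii) only tell us how $r$ changes under a single deletion; to get a clean contradiction one must either iterate the deletion (and control how $m$ and the $|\mathcal{A}^H|$ change, which is the combinatorially fiddly part), or argue directly that a line with $|\mathcal{A}^H| \geq r+2 \geq s - m + 2$ is incompatible with $(\star)$ and $(\star\star)$ unless $\mathcal{A}$ is a near-pencil (defining polynomial, up to coordinates, $xyz\cdots$ with all but one line through a common point), for which $r = s - m = 1$. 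Making the bookkeeping between $s$, $m$, $r$, and the various $|\mathcal{A}^H|$ airtight — rather than the invocation of the addition–deletion machinery, which is routine — is where the real work lies.
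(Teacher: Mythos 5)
Your opening reduction is sound and matches the paper's starting point: assuming $r\le s-m-1$ (the bound $r\le s-m$ being Remark~\ref{upperbound}), you take the line $H$ realizing $t:=\min_L|\mathcal A^L|$, use $r''=t-1$, and observe via Theorem~\ref{additiondeletion}(2) that the only way to have $r\le t-2$ is to be in the regime $r'\le r\le r''-1$, which forces $r=r'+1$. But at that point your proof stops being a proof: the entire remaining case --- every line has $|\mathcal A^H|\ge r+2$, yet $r<s-m$ --- is handled by ``iterate, peeling off lines, until we either reach a contradiction or are forced into the maximal-multiplicity pencil,'' and you yourself flag that making this bookkeeping airtight ``is where the real work lies.'' That is precisely the work the corollary requires, and neither of your two suggested routes is carried out: the iteration is not set up (you never say what controls $m$ and the $|\mathcal A^H|$ after several deletions), and the claim that a line with $|\mathcal A^H|\ge r+2$ is incompatible with $(\star)$ and $(\star\star)$ unless $\mathcal A$ is a near-pencil is asserted, not proved (and is not obviously true; note the paper explicitly poses as an open question the relation between $\alpha_0(\mathcal A)$ and $\min_H|\mathcal A^H|$, which is the kind of direct combinatorial statement you would need).

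The missing idea is that the statement should be proved by induction on $s$, and that its disjunctive form is exactly the right induction hypothesis to apply to the deletion $\mathcal A'=\mathcal A\setminus\{H\}$. Having reached $r=r'+1$ with $r\le s-m-1$ and $r\le t-2$, apply the corollary to $\mathcal A'$: either $r'=(s-1)-m'$, which is impossible since $m'\in\{m,m-1\}$ and either value contradicts $r'\le r\le s-m-1$ together with $r=r'+1$; or $r'\ge|(\mathcal A')^{L}|-1$ for a minimizing line $L$ of $\mathcal A'$, whence $r=r'+1\ge|(\mathcal A')^{L}|\ge|\mathcal A^{L}|-1\ge t-1$, contradicting $r\le t-2$. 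No peeling of multiple lines and no direct use of $(\star)$, $(\star\star)$ is needed. A minor additional remark: you aim to prove $r\ge|\mathcal A^H|-1$ for \emph{every} line, which is stronger than the statement requires (one line, e.g.\ the minimizer, suffices since $\min_L|\mathcal A^L|-1\le|\mathcal A^H|-1$ for any $H$); aiming for the universal version only makes the unfinished case harder.
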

\begin{proof} Denote $t:=\min\{|\mathcal A^H|\, |\, H\in\mathcal A\}$.

We prove the result by induction on $s\geq 3$.

If $s=3$, then $t=2$, $m=2$, and $r=1$, so the claim is true.

Suppose $s\geq 4$. From Remark \ref{upperbound}, we have $r\leq s-m$. Suppose $r\leq s-m-1$. We want to show $r\geq t-1$.

Let $H\in \mathcal A$ with $|\mathcal A^H|=t$. As before, $r'=r(\mathcal A\setminus\{H\})$, and $r''=r(\mathcal A^H)$; so $r''=t-1$. If rank of $\mathcal A'$ is 2, then $t=2$, and obviously $r\geq 2-1=1$.

Suppose rank of $\mathcal A'$ is 3, and suppose to the contrary that $r\leq t-2=r''-1$. Since $r'\leq r\leq r''-1$, by Theorem \ref{additiondeletion} (2), we get $r=r'+1$.

By induction, if $m':=m(\mathcal A')$, we have $r'=(s-1)-m'$, or $r'\geq |(\mathcal A')^L|-1$, for all $L\in\mathcal A'$.

If $r'=(s-1)-m'$, then $m'=m$ (the other option is $m'=m-1$, which contradicts $r'\leq r\leq s-m-1$). So $r'=r=s-m-1$; also a contradiction. So $r'\leq s-m-2$, and $r'\geq |(\mathcal A')^L|-1$, for all $L\in\mathcal A'$. So $r\geq |(\mathcal A')^L|$, for all $L\in\mathcal A'$.

If $H$ and $L$ intersect in a simple (i.e., double) point, then $|(\mathcal A')^L|=|\mathcal A^L|-1$; otherwise $|(\mathcal A')^L|=|\mathcal A^L|$. So, none the less we obtain $r\geq |(\mathcal A)^L|-1\geq t-1$. A contradiction.
\end{proof}

From this corollary we can conclude that $$\min\{|H|\, |\, H\in\mathcal A\}-1\leq r\leq s-m.$$ In the perspective of Remark \ref{lowerbound}, we ask what is the relation between $\alpha_0(\mathcal A)$ and $\min\{|H|\,|\, H\in\mathcal A\}$, for an arbitrary line arrangement $\mathcal A\subset \mathbb{P}^2$ of rank 3.

\medskip

\begin{rem} \label{recursion_remark} Suppose we want to classify all line arrangements $\mathcal A\subset\mathbb P^2$ with $r(\mathcal A)=d$ for some $d\geq 2$. Let $H\in\mathcal A$, and let $\mathcal A'=\mathcal A\setminus\{H\}$. By Theorem \ref{additiondeletion} (1a), $r'=d-1$, or $r'=d$. Then, by Theorem \ref{lines} (i) and (ii), we have either:
\begin{itemize}
  \item[(a)] $\mathcal A$ is obtained recursively from the classification of all line arrangements $\mathcal B\subset\mathbb P^2$ with $r(\mathcal B)=d-1$ by adding a line $H$ that intersects all other lines of $\mathcal B$ in $\geq (d-1)+2=d+1$ points, or
  \item[(b)] every line of $\mathcal A$ has $\leq d+1$ points on it.
\end{itemize}
\end{rem}

\medskip

Case (b) imposes some restrictions on the possible types of line arrangements. Let $P\in Sing(\mathcal A)$, with $m_P=m=m(\mathcal A)$. Since the rank of $\mathcal A$ is 3, there must exist a line $H\in\mathcal A$ that doesn't pass through $P$. So $$m\leq |H|\leq d+1,$$ and this is true for any line that doesn't pass through $P$.

If furthermore, $m=d+1$, then $\mathcal A$ is a supersolvable arrangement with modular point $P$. Hence it is free with exponents ${\rm exp}(\mathcal A)=(1,d, s-(d+1))$, where $|\mathcal A|=s$.

\medskip

\begin{exm} \label{example1} If $\mathcal A\subset\mathbb P^2$ is a line arrangement of rank 3, then Remarks \ref{upperbound} and \ref{lowerbound} give $$\alpha_0(\mathcal A)-1\leq r(\mathcal A)\leq |\mathcal A|-m(\mathcal A).$$ Let us look at Ziegler (\cite{Zi}) - Yuzvinsky (\cite{Yu}) example:

$$\mathcal A_1 = V(xyz(x+y+z)(2x+y+z)(2x+3y+z)(2x+3y+4z)(3x+5z)(3x+4y+5z))$$
$$\mathcal A_2 = V(xyz(x+y+z)(2x+y+z)(2x+3y+z)(2x+3y+4z)(x+3z)(x+2y+3z)).$$ These two arrangements have the same combinatorics, yet $r_1:=r(\mathcal A_1)=6$, and $r_2:=r(\mathcal A_2)=5$. Also for both arrangements $|\mathcal A_i|-m(\mathcal A_i)=9-3=6$, and $\alpha_0(\mathcal A_i)-1=6-1=5$. So $r_1$ achieves the upper bound, and $r_2$ achieves the lower bound.

Also, in both examples $|H|=6\leq r_i+1$, for any $H\in\mathcal A_i$. Therefore both options in the statement of Corollary \ref{cor_lower} are satisfied.

\end{exm}

\subsection{The case of $r=2$.} Remark \ref{recursion_remark} can shortcut quite a bit the calculations done in \cite{To} in order to obtain the classification of rank 3 line arrangements with a quadratic minimal logarithmic derivation. In the above discussion, $d=2$.

For case (a), either by using \cite[Proposition 4.29]{OrTe}, or \cite[Section 2.1]{To}, the line arrangement $\mathcal B$ with $r(\mathcal B)=d-1=1$ is a pencil of $\geq d+1=3$ lines, and an extra line not part of the pencil; or a triangle of 3 lines. There are only two different ways one can add another line to $\mathcal B$ to obtain $\mathcal A$, with at least 3 points on it, and these give \cite[Theorem 2.4 parts (1) and (2)]{To}.

For case (b), $m$ can be only 2, or 3. If $m=2$, then $\mathcal A$ is the generic line arrangement of $s$ lines, and since in this case $Sing(\mathcal A)$ is a star configuration, its defining ideal is generated in degree $s-1$ (see \cite{GeHaMi}).  From Remark \ref{lowerbound} we require $s-1\leq d+1=3$, then $s\leq 4$, leading to $s=4$ which is a special case of \cite[Theorem 2.4 (2)]{To}. If $m=3$, then $\mathcal A$ is a supersolvable line arrangement with exponents $(1,2,s-3)$. At the same time, by \cite[Theorem 3.2 (1)]{DiSti}, we have $2\geq (s-2)/2$, leading to $s\leq 6$. Therefore $\mathcal A$ is the supersolvable arrangement with exponents $(1,2,3)$ which is exactly \cite[Theorem 2.4 (3)]{To}, or $(1,2,2)$ which is a special case of \cite[Theorem 2.4 (1)]{To}.

In summary this is the main result of \cite{To}; parts (1), (2), (3), correspond to (I), (II), and respectively, (III).

\begin{thm}[\cite{To} Theorem 2.4]\label{classification}
If $\mathcal{A}$ has a minimal quadratic syzygy on its Jacobian ideal, but not a linear syzygy, then, up to a change of coordinates, $\mathcal{A}$ is one of the following three types of arrangements with defining polynomials:

\begin{itemize}
	\item [(I)] $F=xyz(x+y)\Pi_{j=4}^{s}(t_jy+z),~t_j\neq 0.$
	\item [(II)] $F=xyz(x+y+z)\Pi_{j=4}^{s}(t_jy+z),~t_j\neq 0,1.$
	\item [(III)] $F=xyz(x+y+z)(x+z)(y+z).$
\end{itemize}

See their affine pictures below:

\medskip
$$\stackrel{
\begin{tikzpicture}
\draw[thick] (-1, 0) -- (3, 0);
\draw[thick] (-0.75, -3/8) -- (3, 1.5);
\draw[thick] (-0.75/2, -0.75) -- (1.5, 3);
\draw[thick] (0.3, -0.75) -- (345/275, 3);
\draw[thick] (0.75, -0.75) -- (12/11, 3);
\draw[thick] (1.8, -0.75) -- (24.375/34.375, 3);
\draw[thick] (2.4, -0.75) -- (135/275, 3);
\draw[thick,dotted] (1.1, -0.5) -- (1.35, -0.5);
\end{tikzpicture}}{\stackrel{}{(\mathrm{I})}}
\hspace{1cm}\stackrel{
	\begin{tikzpicture}
	\draw[thick] (-1, 0) -- (3, 0);
	\draw[thick] (-0.75, -3/8) -- (3, 1.5);
	\draw[thick] (0.3, -0.75) -- (345/275, 3);
	\draw[thick] (0.75, -0.75) -- (12/11, 3);
	\draw[thick] (1.8, -0.75) -- (24.375/34.375, 3);
	\draw[thick] (2.4, -0.75) -- (135/275, 3);
	\draw[thick,dotted] (1.1, -0.5) -- (1.35, -0.5);
	\end{tikzpicture}}{\stackrel{}{(\mathrm{II})}}
\hspace{1cm}\stackrel{
	\begin{tikzpicture}
	\draw[thick] (-1, 0) -- (3, 0);
	\draw[thick] (-0.75/2, -0.75) -- (1.5, 3);
	\draw[line width=1pt] (1, -0.75) -- (1, 3);
	\draw[thick] (2.4, -0.75) -- (135/275, 3);
    \draw[thick] (-5/6, -0.5) -- (2.5, 1.5);
    \draw[thick] (43/15, -0.5) -- (-29/55, 1.5);
	\end{tikzpicture}}{\stackrel{}{(\mathrm{III})}}$$

\end{thm}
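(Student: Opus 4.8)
\medskip
\noindent\emph{Proof strategy.}
Since the statement is exactly \cite[Theorem~2.4]{To}, one plan is simply to invoke that result; but Remark~\ref{recursion_remark} now gives a more transparent derivation, and that is the route I would take. Apply the remark with $d=2$: picking any $H\in\mathcal{A}$ and writing $\mathcal{B}=\mathcal{A}\setminus\{H\}$, Theorem~\ref{additiondeletion}(1a) gives $r(\mathcal{B})\in\{1,2\}$, and Theorem~\ref{lines} splits the problem into two cases --- either (a) $\mathcal{A}$ arises from a rank-$3$ arrangement $\mathcal{B}$ with $r(\mathcal{B})=1$ by adjoining a line $H$ meeting the lines of $\mathcal{B}$ in at least three points, or (b) every line of $\mathcal{A}$ carries at most three singular points of $\mathcal{A}$. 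In both cases one finishes by checking that the output is neither a near-pencil nor a triangle, so that $r(\mathcal{A})=2$ (and not $1$), i.e.\ there is no linear Jacobian syzygy.

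In case (a) I would first pin down the base of the recursion: $r(\mathcal{B})=1$ holds precisely when $\mathcal{B}$ is, up to a change of coordinates, a pencil of $k\ge 3$ concurrent lines together with one transversal line, or a triangle of three lines --- this is \cite[Proposition~4.29]{OrTe} (or \cite[\S2.1]{To}). Then I would enumerate, up to $\mathrm{PGL}_3$, the ways to add a line $H$ meeting the lines of $\mathcal{B}$ in at least three points. Adding $H$ through the pencil center produces only two intersection points, so $H$ must miss the center; the two surviving possibilities are that $H$ passes through one of the double points of $\mathcal{B}$ lying on the transversal (forcing a triple point there) or through none of the singular points of $\mathcal{B}$. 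After normalizing the pencil center to $[1:0:0]$ and the transversal to $\{x=0\}$, these two moves produce exactly the normal forms (I) and (II), the conditions $t_j\ne 0$ (resp.\ $t_j\ne 1$) being precisely what is needed to keep all $s$ lines distinct; the triangle base case is absorbed as the small-$s$ members of these families.

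In case (b), note first that $m:=m(\mathcal{A})\le 3$: a line of $\mathcal{A}$ avoiding a point $P$ with $m_P=m$ already carries at least $m$ singular points of $\mathcal{A}$ (one on each of the $m$ lines through $P$), and that number is $\le d+1=3$; such a line exists because $\mathcal{A}$ has rank $3$. If $m=2$ then $\mathcal{A}$ is generic, so $\mathrm{Sing}(\mathcal{A})$ is a star configuration whose ideal is generated in degree $s-1$ by \cite{GeHaMi}; hence $\alpha_0(\mathcal{A})=s-1$, and Remark~\ref{lowerbound} forces $s-1\le r(\mathcal{A})+1=3$, so $s\le 4$ and the generic $4$-line arrangement appears --- a specialization of (II). If $m=3$ then $\mathcal{A}$ is supersolvable with exponents $(1,2,s-3)$, and \cite[Theorem~3.2(1)]{DiSti} gives $2=r(\mathcal{A})\ge (s-2)/2$, hence $s\le 6$; so the exponents are $(1,2,3)$, giving type (III), or $(1,2,2)$, a specialization of (I). Assembling the outputs of (a) and (b) gives precisely (I), (II), (III).

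The conceptual skeleton --- the two-case split from Remark~\ref{recursion_remark}, the star-configuration degree bound, and the supersolvable inequality --- is immediate from the cited results, so I expect the only real work to lie in case (a): carrying out the projective normalization carefully enough to be sure that there are \emph{exactly} two inequivalent ways to attach $H$, that each yields the displayed family with the stated genericity conditions on the $t_j$, and that the small arrangements thrown up by (b) are genuine specializations of (I)/(II) rather than new types. Equivalently, the delicate point is to rule out accidental coincidences of lines and, for each output, to confirm that no modular point of multiplicity $s-1$ is hiding, which would drop $r$ to $1$ and eject the arrangement from the list.
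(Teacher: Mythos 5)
Your proposal is correct and follows essentially the same route as the paper: the paper likewise derives the classification from Remark~\ref{recursion_remark} with $d=2$, using \cite[Proposition~4.29]{OrTe} for the $r=1$ base case in branch (a), and in branch (b) the star-configuration degree bound from \cite{GeHaMi} together with \cite[Theorem~3.2(1)]{DiSti} to force $s\le 4$ (a special case of (II)) when $m=2$ and exponents $(1,2,3)$ or $(1,2,2)$ when $m=3$. The only cosmetic difference is that the paper treats this as a known result of \cite{To} and presents the recursion as a shortcut rather than a full re-proof, so it does not spell out the $\mathrm{PGL}_3$ normalization you flag as the remaining work in case (a).
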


\medskip

\subsection{The case of $r=3$.} As in the previous situation we apply Remark \ref{recursion_remark}, but when $d=3$. Then the classification is the following:

\begin{thm} \label{cubic} Let $\mathcal A\subset \mathbb P^2$ be a line arrangement of rank 3, with $r(\mathcal A)=3$. Then, up to a change of coordinates, $\mathcal A$ has one of the following defining polynomials (and corresponding (affine) pictures):
	\begin{itemize}
\item[(Ia)] $ F=xyz(x+y)\textcolor{blue}{(bx+y)} \prod_{j=6}^s(t_jy+z), t_j\neq 0, s\geq 7.$

\item[(Ib)] $ F=xyz(x+y)\textcolor{blue}{(bx+z)}\prod_{j=6}^s(t_jy+z), t_j\neq 0, s\geq 6.$

\item[(Ic)] $ F=xyz(x+y)(y+z)\textcolor{blue}{(-x+z)}\prod_{j=7}^s(t_jy+z), t_j\neq 0,1, s\geq 7.$

\item[(Id)] $ F=xyz(x+y)\textcolor{blue}{(ax+by+z)}\prod_{j=6}^s(t_jy+z), t_j\neq 0, s\geq 5.$

\item[(IIa)]$ F=xyz(x+y+z)\textcolor{blue}{(ax+by+z)}\prod_{j=6}^s(t_jy+z), t_j\neq 0,~s\geq 5.$

\item[(IIb)]$ F=xyz(x+y+z)\textcolor{blue}{(bx+y)}\prod_{j=6}^s(t_jy+z), t_j\neq 0, s\geq 6.$

\item[(IIIa)]$ F=xyz(x+z)(y+z)(x+y+z)\textcolor{blue}{(ax+by+z)}.$

\item[(IIIb)]$ F=xyz(x+z)(y+z)(x+y+z)\textcolor{blue}{(ax+ay+z)}.$

\item[(IIIc)]$ F=xyz(x+z)(y+z)(x+y+z)\textcolor{blue}{(x+y+2z)}.$

\item[(IV)]$ F=xy(x-z)(y-z)(x-2z)(y-2z)(x-y).$

\item[(Va)]$ F=xyz(x^2-y^2)(x^2-z^2)(y^2-z^2).$

\item[(Vb)]$ F=xyz(x^2-z^2)(y^2-z^2)(x-y).$

\item[(Vc)]{\bf Updated December 13, 2023:} $ F=xy(x^2-y^2)(x^2-z^2)(y^2-z^2).$ This addition was mentioned to us by Wayne Ng Kwing King and Jean Vall\`{e}s. This is derived from (Va) after deleting the line at infinity $z=0$.
	\end{itemize}

The parameters $a$ and $b$ are any elements of $\mathbb K$ chosen such that we do not repeat any of the lines already selected, nor we obtain undesirable concurrencies; see the corresponding pictures.

\begin{center}
	\begin{tiny}	
		\begin{tabular}{|c|c|c|}
			\hline $\stackrel{\displaystyle{\Large(\mathrm{Ia})}\hspace{3.3cm}}{
				\begin{tikzpicture}
				\draw[thick] (-1, -0.2) -- (3, -0.2);
				\draw[thick] (-0.75, -0.55) -- (3, 1.1);
				\draw[thick] (-0.75/2, -0.75) -- (1.5, 1.7);
				\draw[thick] (0.2, -0.75) -- (4.4375/3.4375, 1.7);
				\draw[thick] (0.5, -0.75) -- (13/11, 1.7);
				\draw[thick] (0.9, -0.75) -- (57/55, 1.7);
				\draw[thick] (1.8, -0.75) -- (24.375/34.375, 1.7);
				\draw[thick] (2.4, -0.75) -- (135/275, 1.7);
				\draw[blue, thick] (-0.6, -0.61) -- (2.5, 1.4);
				\draw[thick,dotted] (1.1, -0.5) -- (1.35, -0.5);
				\end{tikzpicture}}$  & $\stackrel{\displaystyle{\Large(\mathrm{Ib})}\hspace{3.3cm}}{
				\begin{tikzpicture}
				\draw[thick] (-1, -0.2) -- (3, -0.2);
				\draw[thick] (-0.85, -0.6) -- (3, 1.1);
				\draw[thick] (-0.75/2, -0.75) -- (1.5, 1.7);
				\draw[thick] (0.2, -0.75) -- (4.4375/3.4375, 1.7);
				\draw[thick] (0.5, -0.75) -- (13/11, 1.7);
				\draw[thick] (0.9, -0.75) -- (57/55, 1.7);
				\draw[thick] (1.8, -0.75) -- (24.375/34.375, 1.7);
				\draw[thick] (2.4, -0.75) -- (135/275, 1.7);
				\draw[blue, thick] (-1, -0.56) -- (3,0.44);
				\draw[thick,dotted] (1.1, -0.5) -- (1.35, -0.5);
				\end{tikzpicture}}$ & $\stackrel{\displaystyle{\Large(\mathrm{Ic})}\hspace{3.3cm}}{
				\begin{tikzpicture}
				\draw[thick] (-1, -0.2) -- (3, -0.2);
				\draw[thick] (-0.35, -0.45) -- (2.5, 0.8);
				\draw[thick] (-211/1700 , -0.75) -- (6586/4675, 1.7);
				\draw[thick] (0.2, -0.75) -- (4.4375/3.4375, 1.7);
				\draw[thick] (0.4, -0.75) -- (67/55, 1.7);
				\draw[thick] (0.9, -0.75) -- (57/55, 1.7);
				\draw[thick] (1.8, -0.75) -- (24.375/34.375, 1.7);
				\draw[thick] (2.4, -0.75) -- (135/275, 1.7);
				\draw[blue, thick] (-1/3, -0.7) -- (104/45,1);
				\draw[thick,dotted] (1.1, -0.5) -- (1.35, -0.5);
				\end{tikzpicture}}$ \\
				\hline  $\stackrel{\displaystyle{\Large(\mathrm{Id})}\hspace{3.3cm}}{
				\begin{tikzpicture}
				\draw[thick] (-1, -0.2) -- (3, -0.2);
				\draw[thick] (-0.75, -0.55) -- (3, 1.1);
				\draw[thick] (-0.75/2, -0.75) -- (1.5, 1.7);
				\draw[thick] (0.5, -0.75) -- (13/11, 1.7);
				\draw[thick] (1.8, -0.75) -- (24.375/34.375, 1.7);
				\draw[thick] (2.4, -0.75) -- (135/275, 1.7);
				\draw[blue, thick] (-0.9, -0.48) -- (3,0.07);
				\draw[thick,dotted] (1.1, -0.5) -- (1.35, -0.5);
				\end{tikzpicture}}$ & $\stackrel{\displaystyle{\Large(\mathrm{IIa})}\hspace{3.2cm}}{
				\begin{tikzpicture}
				\draw[thick] (-1, -0.2) -- (3, -0.2);
				\draw[thick] (-0.75, -0.55) -- (3, 1.1);
				\draw[thick] (0.3, -0.75) -- (345/275, 1.7);
				\draw[thick] (0.75, -0.75) -- (12/11, 1.7);
				\draw[thick] (1.8, -0.75) -- (24.375/34.375, 1.7);
				\draw[thick] (2.4, -0.75) -- (135/275, 1.7);
				\draw[blue, thick] (-0.5, 0.15) -- (2.8, -0.55);
				\draw[thick,dotted] (1.1, -0.5) -- (1.35, -0.5);
				\end{tikzpicture}}$  & $\stackrel{\displaystyle{\Large(\mathrm{IIb})}\hspace{3.3cm}}{
				\begin{tikzpicture}
				\draw[thick] (-1, -0.2) -- (3, -0.2);
				\draw[thick] (-0.75, -0.55) -- (2.2, 1.1);
				\draw[thick] (-0.1, -0.75) -- (7/5, 1.7);
				\draw[thick] (0.3, -0.75) -- (345/275, 1.7);
				\draw[thick] (0.75, -0.75) -- (12/11, 1.7);
				\draw[thick] (1.8, -0.75) -- (24.375/34.375, 1.7);
				\draw[thick] (2.4, -0.75) -- (135/275, 1.7);
				\draw[blue, thick] (-1, -0.5) -- (2.5, 0.7);
				\draw[thick,dotted] (1.1, -0.5) -- (1.35, -0.5);
				\end{tikzpicture}}$ \\
			\hline  $\stackrel{\displaystyle{\Large(\mathrm{IIIa})}\hspace{3.3cm}}{
				\begin{tikzpicture}
				\draw[thick] (-1, 0) -- (4, 0);
				\draw[thick] (-833/866,-0.4) -- (1267/866, 1.7);
				\draw[thick] (145.299/186.190, 1.7) -- (356.811/186.190, -0.4);
				\draw[line width=1pt] (1, 2) -- (1, -0.5);
				\draw[thick] (-1,-13/60) -- (59/26, 1.2);
				\draw[thick] (497/4810, 1.5) -- (2045/962,-0.4);
				\draw[thick,blue] (-3/13,1.5) -- (4,-0.185);
				\end{tikzpicture}}$ & $\stackrel{\displaystyle{\Large(\mathrm{IIIb})}\hspace{3.3cm}}{
				\begin{tikzpicture}
				\draw[thick] (-1, 0) -- (4, 0);
				\draw[thick] (-833/866,-0.4) -- (1267/866, 1.7);
				\draw[thick] (145.299/186.190, 1.7) -- (356.811/186.190, -0.4);
				\draw[line width=1pt] (1, 2) -- (1, -0.5);
				\draw[thick] (-1,-13/60) -- (59/26, 1.2);
				\draw[thick] (497/4810, 1.5) -- (2045/962,-0.4);
				\draw[thick,blue] (-3/13,1.55) -- (3.2,-0.185);
				\end{tikzpicture}}$ & $\stackrel{\displaystyle{\Large(\mathrm{IIIc})}\hspace{3.3cm}}{
				\begin{tikzpicture}
				\draw[thick] (-1, 0) -- (4, 0);
				\draw[thick] (-833/866,-0.4) -- (1267/866, 1.7);
				\draw[thick] (145.299/186.190, 1.7) -- (356.811/186.190, -0.4);
				\draw[line width=1pt] (1, 2) -- (1, -0.5);
				\draw[thick] (-1,-13/60) -- (59/26, 1.2);
				\draw[thick] (497/4810, 1.5) -- (2045/962,-0.4);
				\draw[thick,blue] (-3/13,1.3) -- (4,-0.185);
				\end{tikzpicture}}$ \\		
			\hline
			$\stackrel{\displaystyle{\Large(\mathrm{IV})}\hspace{3.3cm}}{
				\begin{tikzpicture}
				\draw[thick] (-1, 0) -- (3.5, 0);
				\draw[thick] (-833/866,-0.4) -- (1267/866, 1.7);
				\draw[thick] (465/866, 1.7) -- (2565/866, -0.4);
				\draw[line width=1pt] (1, 2) -- (1, -0.5);
				\draw[thick] (-1,-13/60) -- (59/26, 1.2);
				\draw[thick] (-1/3, 1.1) -- (95/27, -0.2);
				\draw[thick] (115.987/911.898, 1.7) -- (1484.200/455.949, -0.2);
				\end{tikzpicture}}$ & $\stackrel{\displaystyle{\Large\hspace{0.5cm}(\mathrm{Va})}\hspace{1.8cm}\mathrm{projective}~\mathrm{picture}}{
				\begin{tikzpicture}
				\draw[thick] (-1, 0) -- (4, 0);
				\draw[thick] (-833/866,-0.4) -- (1267/866, 1.7);
				\draw[thick] (145.299/186.190, 1.7) -- (356.811/186.190, -0.4);
				\draw[line width=1pt] (1, 2) -- (1, -0.5);
				\draw[thick] (-1,-13/60) -- (59/26, 1.2);
				\draw[thick] (497/4810, 1.5) -- (2045/962,-0.4);
				\draw[thick] (1093/2598, 1.6) -- (10093/2598, -0.2);
				\draw[thick] (-9/26, 1) -- (4, -0.13);
				\draw[thick] (-3/13,1.3) -- (4,-0.185);
				\draw[thick] (3.25,1) -- (3.25,2);
				\draw[thick] (3.5,1) -- (3.5,2);
				\draw[thick] (3.75,1) -- (3.75,2);
				\draw[thick] (3,1.75) -- (4,1.75);
				\draw[thick] (3,1.5) -- (4,1.5);
				\draw[thick] (3,1.25) -- (4,1.25);
				\draw[thick] (3.1,1.1) -- (3.9,1.9);
				\draw[thick] (3.1,1.9) -- (3.9,1.1);
				\draw (3.5, 1.5) circle (.6);
				\end{tikzpicture}}$  & $\stackrel{\displaystyle{\Large\hspace{0.5cm}(\mathrm{Vb})}\hspace{1.8cm}\mathrm{projective}~\mathrm{picture}}{
				\begin{tikzpicture}
				\draw[thick] (-1, 0) -- (4, 0);
				\draw[thick] (-833/866,-0.4) -- (1267/866, 1.7);
				\draw[thick] (145.299/186.190, 1.7) -- (356.811/186.190, -0.4);
				\draw[line width=1pt] (1, 2) -- (1, -0.5);
				\draw[thick] (-1,-13/60) -- (59/26, 1.2);
				\draw[thick] (1093/2598, 1.6) -- (10093/2598, -0.2);
				\draw[thick] (-9/26, 1) -- (4, -0.13);
				\draw[thick] (-3/13,1.3) -- (4,-0.185);
				\draw[thick] (3.25,1) -- (3.25,2);
				\draw[thick] (3.5,1) -- (3.5,2);
				\draw[thick] (3.75,1) -- (3.75,2);
				\draw[thick] (3,1.75) -- (4,1.75);
				\draw[thick] (3,1.5) -- (4,1.5);
				\draw[thick] (3,1.25) -- (4,1.25);
				\draw[thick] (3.1,1.1) -- (3.9,1.9);
				\draw (3.5, 1.5) circle (.6);
				\end{tikzpicture}}$ \\
			\hline
                 \end{tabular}
         	\end{tiny}

         \begin{tiny}
         	           \begin{tabular}{|c|}
           $\stackrel{\displaystyle{\Large\hspace{0.5cm}(\mathrm{Vc})}\hspace{1.8cm}\mathrm{projective}~\mathrm{picture}}{
            	\begin{tikzpicture}
            		\draw[thick] (-1, 0) -- (4, 0);
            		\draw[thick] (-833/866,-0.4) -- (1267/866, 1.7);
            		\draw[thick] (145.299/186.190, 1.7) -- (356.811/186.190, -0.4);
            		\draw[line width=1pt] (1, 2) -- (1, -0.5);
            		\draw[thick] (-1,-13/60) -- (59/26, 1.2);
            		\draw[thick] (497/4810, 1.5) -- (2045/962,-0.4);
            		\draw[thick] (-9/26, 1) -- (4, -0.13);
            		\draw[thick] (-3/13,1.3) -- (4,-0.185);
            		\draw[thick] (3.25,1) -- (3.25,2);
            		\draw[thick] (3.5,1) -- (3.5,2);
            		\draw[thick] (3.75,1) -- (3.75,2);
            		\draw[thick] (3,1.75) -- (4,1.75);
            		\draw[thick] (3,1.5) -- (4,1.5);
            		\draw[thick] (3,1.25) -- (4,1.25);
            		\draw[thick] (3.1,1.1) -- (3.9,1.9);
            		\draw[thick] (3.1,1.9) -- (3.9,1.1);
            	      \end{tikzpicture}}$\\
                     \hline
           \end{tabular}
            	\end{tiny}\hspace*{10.945cm}
                 \end{center}
\end{thm}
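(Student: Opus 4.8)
The plan is to let the addition--deletion recursion of Remark~\ref{recursion_remark}, applied with $d=3$, drive the whole argument. For an arbitrary line arrangement $\mathcal A$ with $r(\mathcal A)=3$ it produces a dichotomy: either $\mathcal A=\mathcal B\cup\{H\}$ where $\mathcal B$ is, up to coordinates, one of the three families (I), (II), (III) of Theorem~\ref{classification} (so $r(\mathcal B)=2$) and $H$ is a new line with $|H|\ge 4$ --- and then Theorem~\ref{lines}(i) forces $r(\mathcal A)=r(\mathcal B)+1=3$ --- or else every line of $\mathcal A$ carries at most $4$ singular points. The classification is the union of the arrangements produced by these two cases, after discarding projective repetitions, and I would work the two cases out separately.

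Case (a): For each base family $\mathcal B$ I would first record $Sing(\mathcal B)$ together with all multiplicities. In types (I) and (II) the singular locus is dominated by the apex $p_0$ of the pencil, a point of multiplicity $|\mathcal B|-2$, the other singularities being nodes apart from a single triple point in type (I); in type (III), which has exactly six lines, there are four triple points, interchangeable under the symmetries of $\mathcal B$, together with three nodes. The key reduction is that $|H|$ equals the number of distinct points in which $H$ meets the lines of $\mathcal B$; hence $H$ cannot pass through $p_0$ (that forces $|H|\le 3$) and can pass through at most one triple point of $\mathcal B$. After this only finitely many combinatorial positions of $H$ survive --- indexed by which of the remaining singular points of $\mathcal B$ lie on $H$ and by which (at most two) pairs of lines of $\mathcal B$ the line $H$ renders concurrent --- and for each I would normalize coordinates while holding the free parameters $t_j$ of $\mathcal B$ fixed, read off the defining polynomial of $\mathcal A$, and match it against (Ia)--(Id), (IIa)--(IIb), (IIIa)--(IIIc). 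The lower bounds on $s$ in the statement are precisely the thresholds beyond which $|H|\ge 4$ is achievable for the position of $H$ in question; Theorem~\ref{lines}(i) then certifies $r(\mathcal A)=3$, while the admissible values of $a,b$ are exactly those for which $H$ is new and no extra concurrence forces $r$ down to $2$.

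Case (b): Here the decisive tool is the incidence identity $(\star)$. For any line $H\in\mathcal A$ one has $\sum_{P\in Sing(\mathcal A)\cap H}(m_P-1)=s-1$; since $|Sing(\mathcal A)\cap H|=|H|\le 4$ and every $m_P\le m:=m(\mathcal A)\le 4$ (a line missing a point of top multiplicity has $m\le |H|\le 4$), this yields $s-1\le 4(m-1)$, i.e. $s\le 5$ when $m=2$, $s\le 9$ when $m=3$, and $s\le 13$ when $m=4$. So Case (b) involves only finitely many intersection lattices, which I would organize by $m$. When $m=2$ the arrangement is generic, $Sing(\mathcal A)$ is a star configuration with $\alpha_0(\mathcal A)=s-1$, and Remark~\ref{lowerbound} forces $s\le 5$; only $s=5$ survives, giving the generic five-line arrangement (a special case of (IIa)). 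When $m=4$ the arrangement is supersolvable with a modular point of multiplicity $4$, hence free with exponents $(1,3,s-4)$, so $r=\min(3,s-4)=3$ requires $s\ge 7$; a closer look at the lines through and away from the modular point, using $(\star)$, $(\star\star)$ and $|H|\le 4$, reduces the survivors to the $B_3$-reflection arrangement (Va) (and whatever small instances already appear in Case (a)). When $m=3$ one first excludes a modular triple point --- that would make $\mathcal A$ free with an exponent at most $2$, hence $r\le 2$ --- and then combines $s\le 9$, $(\star)$, $(\star\star)$ and $\alpha_0(\mathcal A)-1\le 3$ to enumerate the remaining lattices, which yield (IV), (Vb) and the sporadic small members of the parametrized families. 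Finally one unites the outputs, removes projective duplicates, and for each family on the list verifies $r=3$ (not $2$): for the parametrized families this is Theorem~\ref{lines}(i) plus the stated genericity of $a,b,t_j$, and for (IV), (Va), (Vb) it is a direct computation of a degree-$3$ syzygy on $J_F$.

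I expect the main obstacle to be bookkeeping rather than any single hard step. In Case (a), genuinely different choices of $H$ very often give projectively equivalent arrangements, so one must normalize coordinates consistently across a one- or two-parameter family to decide when a construction is really new; this is what collapses the many a priori positions of $H$ down to the short list (Ia)--(IIIc). In Case (b), the crude bounds $s\le 9$ (for $m=3$) and $s\le 13$ (for $m=4$) still leave a sizeable pool of candidate lattices, and pruning it to the three sporadic arrangements forces one to use $(\star)$, $(\star\star)$, $\alpha_0(\mathcal A)-1\le 3$, freeness of supersolvable arrangements and Theorem~\ref{lines} in concert, together with an explicit syzygy to certify $r=3$ in each survivor. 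The addition--deletion machinery of Theorems~\ref{additiondeletion} and \ref{lines} is exactly what keeps both of these finite and manageable.
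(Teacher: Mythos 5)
Your skeleton is the same as the paper's: the dichotomy of Remark~\ref{recursion_remark} with $d=3$, with case (a) handled by adding a line meeting the arrangements of Theorem~\ref{classification} in at least four points (so that Theorem~\ref{lines}(i) gives $r=3$), and case (b) handled by bounding $s$ and enumerating. Your case (a) is essentially what the paper does. The genuine problems are in case (b), where the purely elementary toolkit you propose --- $(\star)$, $(\star\star)$, $|H|\le 4$, and $\alpha_0(\mathcal A)-1\le 3$ --- is not strong enough to finish, and where your claimed outputs are not correct as stated.

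Concretely: for $m=4$ the arrangement is supersolvable with a modular point of multiplicity $4$, and your incidence count only gives $s\le 13$; the paper needs $s\le 3m-3=9$ from \cite[Theorem 1.1]{AbDi}, the classification of $4$-homogeneous supersolvable arrangements with $n_4\ge 3$ from \cite{HaHa} and \cite{AbDi} to land on (Va), and --- decisively --- at $s=9$ the counting only reduces to a combinatorial type in which some line $L$ has $T_L=4$ and $D_L=0$; excluding this over $\mathbb C$ is precisely \cite[Conjecture 3.3]{AnTo}, proved in general only in \cite{Ab}. No amount of $(\star)$/$(\star\star)$ bookkeeping closes that case. Moreover (Vb), with $s=8$ and two modular points of multiplicity $4$, lives in this $m=4$ branch, not under $m=3$ where you placed it. For $m=3$ your bound $s\le 9$ (the paper gets $s\le 8$ from $r\ge (s-2)/2$, \cite[Theorem 3.2 (1)]{DiSti}) admits the dual Hesse configuration at $s=9$ and the deleted Hesse arrangement at $s=8$; both pass every combinatorial and $\alpha_0$ filter you list yet have $r=4$, so they must be eliminated by explicit syzygy computations (the paper does this for the deleted Hesse with Macaulay2) rather than by lattice enumeration --- your plan budgets direct computations only for certifying survivors, not for killing realizable candidates, and your asserted list of $m=3$ survivors omits them. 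The paper organizes this branch via \cite[Theorem 4.18]{TaDiSti}, stratifying by the number $n_3$ of triple points, which is what actually makes the enumeration terminate.
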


\medskip

\begin{proof} The cases (Ia)-(IIIc) are obtained by adding a line (depicted here in blue) with at least $d+1=4$ intersection points to a line arrangement $\mathcal B$, with $r(\mathcal B)=2$ (so a part of the classification in Theorem \ref{classification}).

The arrangements not obtained recursively are treated in case (b) of Remark \ref{recursion_remark}.

\medskip

If $m=2,3$, then $\mathcal A$ is an arrangement with only double and triple points (i.e., for any $P\in Sing(\mathcal A)$, $m_P=2,3$), and therefore, by \cite[Theorem 3.2 (1)]{DiSti}, we have $$r(\mathcal A)=3\geq\frac{s-2}{2},$$ so $s\leq 8$. Also, every line has at most $d+1=4$ points on it. Suppose $T_H$ is the number of triple points on a line $H\in \mathcal A$, and $D_H$ is the number of double points on the same line $H$. So $$D_H+T_H\leq 4.$$ From formula $(\star)$ we also have:
$$D_H+2T_H=s-1.$$ Of course, if $s\geq 6$, then $T_H\geq 1$ for all $H\in\mathcal A$, because otherwise (i.e., $T_H=0$) $D_H\geq 5$ which is impossible to happen for line $H$.

$\bullet$ If $T_H=1$, then $D_H\leq 3$, so $s\leq 3+2\cdot 1+1=6$.

$\bullet$ If $T_H=2$, then $D_H\leq 2$, so $s\leq 2+2\cdot 2 +1=7$.

$\bullet$ If $T_H=3$, then $D_H\leq 1$, and so $s\leq 1+2\cdot 3 +1=8$.

$\bullet$ If $T_H\geq 4$, then $s\geq 9$, contradiction.

With all of this information, we can apply \cite[Theorem 4.18]{TaDiSti}. Below, $n_2$ is the number of double points, and $n_3$ is the number of triple points. From $(\star\star)$ we have $\displaystyle n_2+3n_3={{s}\choose{2}}$.
\begin{itemize}
  \item[i.] $n_3=0$, meaning that $m=2$. Then $\mathcal A$ is a generic line arrangement of $s=3+2=5$ lines. This is the special case of (IIa) with $s=5$.
  \item[ii.] $1\leq n_3\leq 3$. Then $s=3+3=6$. When $n_3=1$, we have the special case (IIa) with $s=6$; when $n_3=2$, we have the special cases (Id) and (IIb) with $s=6$; when $n_3=3$, we have the special case (Ib) with $s=6$.
  \item[iii.] $n_3=4$. If $s=6$, then $n_2=15-12=3$. Since $D_H+2T_H=5$, we can have $T_H=2, D_H=1$, or $T_H=1, D_H=3=n_2$. This second case cannot happen, because otherwise, for all other lines $D_H=0$; contradiction. So $T_H=2$ and $D_H=1$ for all six lines $H$ of $\mathcal A$. This is the case of $\mathcal A(2,2,3):=V((x^2-y^2)(x^2-z^2)(y^2-z^2))$, which has $r=2$ (i.e., situation (III) in Theorem \ref{classification}). \\ Then \cite[Theorem 4.18 (3)]{TaDiSti} is saying that we must have $s=r+4=7$, and that $\mathcal A$ is obtained by adding a generic line $H$ to the line arrangement $\mathcal A(2,2,3)$ (i.e., situation (IIIa) in Theorem \ref{cubic}). But in this case $|H|=6$, a contradiction with our setup for case (b).
  \item[iv.] $n_3=5$. \cite[Theorem 4.18 (4)(A)]{TaDiSti} implies that $T_L=1$ which by the first bullet above, leads to $s=6$. So $D_L=5-2=3$, yet $n_2=15-15=0$. Contradiction. So $s=7$, and $\mathcal A$ is obtained by adding the line $L$ to one of the double points of $\mathcal A(2,2,3)$ (i.e., situation (IIIb) in Theorem \ref{cubic}). But this contradicts our case (b): $|L|=5\nleq 4$.  \\ So we are left with case (4)(B) of that theorem, with $s=7$: we are adding $7-7=0$ generic lines to the arrangement denoted there by $\mathcal B$; this is the same arrangement as $\mathcal A_4:=V(xy(x-z)(y-z)(x-2z)(y-2z)(x-y))$ in \cite[Section 4]{To1}. So a new type, denoted with (IV).
  \item[v.] $n_3=6$. In this instance $s=7, 8$, and $T_H=2,3$ for all $H$. Through any triple point there must be a line $H$ with $T_H=2$ (otherwise we will have $n_3\geq 7$). But from the second bullet above, we must have $s=7$. Hence, $n_2=21-18=3$, and $D_H=2$. If $L$ is a line with $T_L=3$, then $D_H=0$. Because $n_2=3$, every triple point will have one line with $T=2$ and two lines with $T=3$ through it. Therefore $\mathcal A$ is the non-Fano arrangement (which is free with exponents $(1,3,3)$); this is type (IIIc).
  \item[vi.] $n_3=7$. Then every line through a triple point will have $T\leq 3$ (see the forth bullet above). If $s=7$, then $n_2=0$, and such an arrangement is the Fano plane, which is realizable only over a field of characteristic 2. \\ So $s=8$, and from the third bullet above, for every $H\in \mathcal A$, we must have $D_H=1$, and $T_H=3$. Also, $n_2=28-21=7$. But this is impossible, since every double point belongs to exactly two lines, and every line has exactly one double point, hence $n_2=8/2=4$.
  \item[vii.] $n_3\geq 8$. Then, $s=8$, $n_2=28-24=4$ (here $n_3=8$), or $n_2=28-27=1$ (here $n_3=9$). Also, from the third bullet $T_H=3$, and $D_H=1$ for all $H\in\mathcal A$. Therefore $n_3=9$ cannot happen; just look at two lines passing through the same triple point. \\ So $n_3=8$. But there is only one such arrangement, namely the deleted Hasse arrangement: $\mathcal A=V((x^2+xy+y^2)(x^3-z^3)(y^3-z^3))$. Computations with Macaulay 2 (\cite{EiGrSt}) show that $r(\mathcal A)=4$, so contradiction.
\end{itemize}

\bigskip

Now suppose $m=3+1=4$. So $\mathcal A$ is supersolvable with exponents $(1,3,s-4)$. If $s-4=3$, then $s=7$, and we can have two special cases of types (Ia) and (Ic) in Theorem \ref{cubic}.

Suppose $s\geq 8$. By \cite[Theorem 1.1]{AbDi}, we have $s\leq 3m-3=9$.

From \cite[Corollary 2]{HaHa}, if there is another modular point of multiplicity $m'<m=4$, then $s=m+m'-1\leq 6$. So in order to proceed with our situation, we must consider the case of {\em $m-$homogeneous} supersolvable line arrangements (i.e., all modular points have the same multiplicity $m$). As it is worked out in \cite[Section 3]{HaHa}, and very well summarized in \cite[Theorem 1.2]{AbDi}, any 4-homogeneous supersolvable arrangement, with $n_4\geq 3$ is, after a change of variables, $\mathcal A(2,1,3):=V(xyz(x^2-y^2)(x^2-z^2)(y^2-z^2))$; this is type (Va) in our list.

If $n_4=2$, let $P,Q\in Sing(\mathcal A)$ be with $m_P=m_Q=m=4$. Let $H:=\overline{PQ}$. Then $D_H+2T_H+2\cdot (4-1)=s-1\leq 8$.

\begin{itemize}
  \item[i.] If $s=8$, then $D_H=1$ and $T_H=0$. If $L$ is the line that intersects $H$ at the double point, then $|L|=4$, and therefore $T_L=3$ and $D_L=1$. So $\mathcal A$ has defining polynomial $$xyz(x^2-z^2)(y^2-z^2)(x-y),$$ which is type (Vb) in our list.
  \item[ii.] If $s=9$, for real arrangements, as \cite[Section 3.2.3]{HaHa} shows, this is impossible. For complex arrangements, we proceed with the same ideas as above. We can have $D_H=2$ and $T_H=0$, or $D_H=0$ and $T_H=1$. In the first case, if $L$ is a line that intersects $H$ at one of the double points, then $|L|=D_L+T_L=4$, and $D_L+2T_L=9-1=8$. So $T_L=4$, giving that $D_L=0$, contradiction with the fact that $L$ has at least one double point. In the second case, any line $L\neq H$ through the one triple point $R$ has, by the same calculation we just did, $T_L=4$, and $D_L=0$. By \cite[Conjecture 3.3]{AnTo}, $\mathcal A$ is not realizable over the field of complex numbers.\footnote{The conjecture is shown for $s\leq 12$ in \cite[Corollary 3.5]{AnTo}, and to our overwhelming excitement, the conjecture has been proven in general by Takuro Abe in \cite{Ab}.} Therefore no other type is added to the list.
\end{itemize}

{\bf Updated December 13, 2023:} Suppose $n_4=1$, and therefore there is only one modular point, $P$, which has multiplicity $m_P=4$. Let $H\in\mathcal A$ be a line not passing through $P$. Once again, from formula $(\star)$ at the beginning of this section, $D_H+2T_H=s-1$, where $s=8$ or $9$. Also, since $P$ is modular and $P\notin H$, $D_H+T_H=4$, the total number of the singular points on $H$. So $T_H=s-5$.

If $s=9$, then $T_H=4$, and so $D_H=0$. Again, by \cite[Corollary 3.5]{AnTo}, such line arrangement is not realizable over the fields of complex numbers.

So $s=8$, and therefore $T_H=3$ and $D_H=1$. If $L\in\mathcal A$ with $P\in L$, then $D_L+2T_L+3=7$. This leads to $D_L=0, T_L=2$, or $D_L=2, T_L=1$, or $D_L=4, T_L=0$ (we will see that this last option is not possible). Suppose $H_1,H_2,H_3,H_4$ are the lines of $\A$ not passing through $P$. Suppose $Q_1, Q_2, Q_3, Q_4$ are the four singular points of $H_1$; i.e., $\{Q_i\}=H_1\cap L_i$, where $L_1,L_2,L_3,L_4$ are the lines of $\A$ passing through modular point $P$. Suppose $m_{Q_1}=2$. Then $m_{Q_2}=m_{Q_3}=m_{Q_4}=3$; suppose $H_1\cap H_j\cap L_j=\{Q_j\}, j=2,3,4$.

Suppose we constructed $H_2$. Since $\mathcal A$ is supersolvable, the intersection (i.e., singular) point $H_3\cap H_2$ is either the point $H_2\cap L_1$ or $H_2\cap L_4$. Suppose it is former, $Q:=H_3\cap H_2=H_2\cap L_1$. This leaves only one option to construct $H_4$; it is the line connecting the intersection points $L_2\cap H_3$, $Q_4$, and $L_3\cap H_2$. This is exactly the line arrangement of type (Vc).
\end{proof}

\section{Computational approach}

Let $\mathcal A=\{V(l_1),\ldots,V(l_s)\}\subset \mathbb P^2$ be a rank 3 line arrangement. Suppose we fix the equations of the defining linear forms: $l_i=a_ix+b_iy+c_iz\in S:=\mathbb K[x,y,z]$ for each $i\in \{1,\ldots,s\}$. Let $\theta=P\frac{\partial}{\partial x}+Q\frac{\partial}{\partial y}+R\frac{\partial}{\partial z}$ be a degree $d$ logarithmic derivation of $\mathcal{A}$, that is, $P,Q,R \in S_d.$

For practical purpose we can suppose $l_1=x,~l_2=y,~l_3=z$, then we can rewrite $$\theta=P'x\frac{\partial}{\partial x}+Q'y\frac{\partial}{\partial y}+R'z\frac{\partial}{\partial z}~ \mathrm{with}~ P',Q',R' \in S_{d-1}.$$

Using the Euler derivation, we can write $\theta=P'\theta_E+(Q'-P')y\frac{\partial}{\partial y}+(R'-P')z\frac{\partial}{\partial z}.$ Therefore, we can suppose $P'=0$, and in the following sections we will work with a logarithmic derivation with the form $$\theta=Q'y\frac{\partial}{\partial y}+R'z\frac{\partial}{\partial z}~ \mathrm{with}~ Q',R' \in S_{d-1}.$$

\begin{rem}\label{divisonlines}
Since $l_1=x,~l_2=y,~l_3=z$, for each $i\geq 4$ we have two of $a_i,b_i,c_i$ being nonzero. Reordering, if necessary, we can group the lines $l_1,\ldots,l_s$ in four cases:

\begin{itemize}
	\item[i)] $a_4,b_4,\ldots,a_{\mathfrak{j}},b_{\mathfrak{j}} \neq 0$ and $c_4,\ldots,c_{\mathfrak{j}} = 0;$
	\item[ii)] $a_{\mathfrak{j}+1},c_{\mathfrak{j}+1},\ldots,a_{\mathfrak{l}},c_{\mathfrak{l}} \neq 0$ and $b_{\mathfrak{j}+1},\ldots,b_{\mathfrak{l}} = 0;$
	\item[iii)] $b_{\mathfrak{l}+1},c_{\mathfrak{l}+1},\ldots,b_{\mathfrak{n}},c_{\mathfrak{n}} \neq 0$ and $a_{\mathfrak{l}+1},\ldots,a_{\mathfrak{n}} = 0;$
	\item[iv)] $a_{\mathfrak{n}+1},b_{\mathfrak{n}+1},c_{\mathfrak{n}+1},\ldots,a_{s},b_{s},c_{s} \neq 0,$ with $4\leq \mathfrak{j}< \mathfrak{l}< \mathfrak{n}< s.$
\end{itemize}
For our purposes we also can suppose $b_j=1,$ for $j\in \{4,\ldots,\mathfrak{j} \}$ in case i), and $c_i=1$ for for cases ii), iii), and iv) (i.e., for $i\in\{\mathfrak{j}+1,\ldots,s\}$).
\end{rem}

The main goal is to analyze the shape of such a logarithmic derivation, towards obtaining conditions for a line arrangement to possess a minimal logarithmic derivation to degrees 2 and 3 through a matrix argument. In what follows we will use ``$\Leftrightarrow$'' to denote equivalent matrices, i.e., matrices obtained from one-another via elementary row operations.

\subsection{Degree 2 logarithmic derivations}	
	
First we review the beginning of \cite[Section 2.3]{To} while having a more organized approach. Let $\theta=Q'y\frac{\partial}{\partial y}+R'z\frac{\partial}{\partial z},~ \mathrm{with}~ Q',R' \in S_1,$ be a degree 2 logarithmic derivation of $\mathcal{A}$. For each $i\in \{1,\ldots,s\}$ we have $\theta(l_i)=l_iT_i$ for some $T_i\in S_1$. Let $Q'=q_1x+q_2y+q_3z,~R'=r_1x+r_2y+r_3z,~T_i=t^i_1x+t^i_2y+t^i_3z\in S_1.$ Therefore,
$$b_iyQ'+c_izR'=(a_ix+b_iy+c_iz)T_i.$$

Comparing coefficients, the following six relations are obtained

\begin{eqnarray}\label{EQdegree2}
\displaystyle \Big\{\begin{array}{ccc}
0=a_it^i_1\hspace{1.5cm} & \hspace{0.5cm}b_iq_2=b_it^i_2\hspace{1.2cm} &\hspace{0.5cm} c_ir_3=c_it^i_3\hspace{2.1cm}\\
b_iq_1=a_it^i_2+b_it^i_1& \hspace{0.5cm}c_ir_1=a_it^i_3+c_it^i_1&\hspace{0.5cm} b_iq_3+c_ir_2=b_it^i_3+c_it^i_2
\end{array}
\end{eqnarray}

\begin{rem}\label{comput-quadratic} If we projectively eliminate the parameters $t_1^i,t_2^i,t_3^i$ in (\ref{EQdegree2}), we obtain that $\mathcal A$ has a quadratic logarithmic derivation that is not a multiple of $\theta_E$ if and only if the points dual to the lines of $\mathcal A$ are contained in the variety $$V(xy(yq_1-xq_2), xz(zr_1-xr_3), yz[y(q_3-r_3)-z(q_2-r_2)]),$$ for some constants $q_i,r_j$, not all zero. In a snapshot, this is \cite[Corollary 2.2]{To}, which is used to prove, via some challenging computations, the classification of line arrangements with quadratic minimal logarithmic derivations.

As we mentioned in Section 2.1, in this project our approach has been simplified, and it goes on the opposite direction of \cite{To}: with the powerful tools provided by the addition-deletion results in \cite{TaDiSti}, we first find the classification, and as we will see below, we find the corresponding minimal quadratic logarithmic derivations (i.e., the constants $q_i,r_j$).
\end{rem}

\bigskip

Applying Remark \ref{divisonlines} to (\ref{EQdegree2}) and eliminating the parameters $t_1^i,t_2^i,t_3^i$, we have the following equations for $q_1,q_2,q_3,r_1,r_2,r_3$ and their associated matrices  $\mathcal{N}_{\mathrm{i}},\mathcal{N}_{\mathrm{ii}},\mathcal{N}_{\mathrm{iii}},\mathcal{N}_{\mathrm{iv}}$ respectively.

$\bullet$ \underline{Situation i)}: for $4\leq j \leq \mathfrak{j}$ we have $q_3=0$ and $q_1-a_jq_2=0$ with matrix
$$\mathcal{N}_{\mathrm{i}}=\left[\begin{array}{cccccc}
	0&0&1&0&0&0\\
	1&-a_4&0&0&0&0\\
	\vdots&\vdots&\vdots&\vdots&\vdots&\vdots\\ 	
	1&-a_j&0&0&0&0\\
	\vdots&\vdots&\vdots&\vdots&\vdots&\vdots\\
	1&-a_{\mathfrak{j}}&0&0&0&0\\
\end{array}\right].$$

$\bullet$ \underline{Situation ii)}: for $\mathfrak{j}+1\leq l \leq \mathfrak{l}$ we have $r_2=0$ and $r_1-a_lr_3=0$ with matrix
$$\mathcal{N}_{\mathrm{ii}}=\left[\begin{array}{cccccc}
0&0&0&0&1&0\\
0&0&0&1&0&-a_{\mathfrak{j}+1}\\
\vdots&\vdots&\vdots&\vdots&\vdots&\vdots\\ 	
0&0&0&1&0&-a_l\\
\vdots&\vdots&\vdots&\vdots&\vdots&\vdots\\
0&0&0&1&0&-a_{\mathfrak{l}}\\
\end{array}\right].$$

$\bullet$ \underline{Situation iii)}: for $\mathfrak{l}+1\leq n \leq \mathfrak{n}$ we have $q_1-r_1=0$ and $(q_2-r_2)-b_n(q_3-r_3)=0$ with matrix
$$\mathcal{N}_{\mathrm{iii}}=\left[\begin{array}{cccccc}
1&0&0&-1&0&0\\
0&1&-b_{\mathfrak{l}+1}&0&-1&b_{\mathfrak{l}+1}\\
\vdots&\vdots&\vdots&\vdots&\vdots&\vdots\\ 	
0&1&-b_n&0&-1&b_n\\
\vdots&\vdots&\vdots&\vdots&\vdots&\vdots\\
0&1&-b_{\mathfrak{n}}&0&-1&b_{\mathfrak{n}}\\
\end{array}\right].$$

$\bullet$ \underline{Situation iv)}: for $\mathfrak{n}+1\leq i \leq s$ we have $b_iq_1-a_iq_2=0$, $r_1-a_ir_3=0$, and $(q_2-r_2)-b_i(q_3-r_3)=0$ with matrix $\mathcal{N}_{\mathrm{iv}}$ obtained by concatenating the following matrices:
\begin{tiny} $$\iota=\left[\begin{array}{cccccc}
b_{\mathfrak{n}+1}&-a_{\mathfrak{n}+1}&0&0&0&0\\
\vdots&\vdots&\vdots&\vdots&\vdots&\vdots\\ 	
b_i&-a_i&0&0&0&0\\
\vdots&\vdots&\vdots&\vdots&\vdots&\vdots\\
b_s&-a_s&0&0&0&0\\
\end{array}\right]
\varsigma=\left[\begin{array}{cccccc}
0&0&0&1&0&-a_{\mathfrak{n}+1}\\
\vdots&\vdots&\vdots&\vdots&\vdots&\vdots\\ 	
0&0&0&1&0&-a_i\\
\vdots&\vdots&\vdots&\vdots&\vdots&\vdots\\
0&0&0&1&0&-a_s\\
\end{array}\right]
\mathfrak{i}=\left[\begin{array}{cccccc}
0&-1&b_{\mathfrak{n}+1}&0&1&-b_{\mathfrak{n}+1}\\
\vdots&\vdots&\vdots&\vdots&\vdots&\vdots\\ 	
0&-1&b_i&0&1&-b_i\\
\vdots&\vdots&\vdots&\vdots&\vdots&\vdots\\
0&-1&b_s&0&1&-b_s\\
\end{array}\right].$$
\end{tiny}

We can represent all the equations above by using the following null product of representative matrix:

$$\mathcal{M}_{(3s-2\mathfrak{n})\times 6}\cdot \left[\begin{array}{cccccc}
q_1&q_2&q_3&r_1&r_2&r_3
\end{array}\right]_{6\times 1}^{t}=0,$$
where the matrix $\mathcal{M}:=\mathcal{M}_{(3s-2\mathfrak{n})\times 6}$ is the concatenation of the matrices $\mathcal{N}_{\mathrm{i}},\mathcal{N}_{\mathrm{ii}},\mathcal{N}_{\mathrm{iii}},\mathcal{N}_{\mathrm{iv}}$. This linear system has non trivial solution if and only if $\mathrm{rank}(\mathcal{M})<\min\{3s-2\mathfrak{n},6\}.$

Using the classification of all line arrangements in $\mathbb{P}^2$ with $r(\mathcal{A})=2$ by (\ref{classification}) we can determine the shape of each degree 2 logarithmic derivation associated to each the defining polynomial.

\begin{cor} \label{quadratic-log}
The degree 2 logarithmic derivation associated to each the defining polynomial obtained in (\ref{classification}) is, respectively,
\begin{itemize}
	\item [(I)] If $s\geq 5,$ then $\theta=(x+y)(y\frac{\partial}{\partial y}+z\frac{\partial}{\partial z}).$ \\
               If $s=4$ then $\theta=\alpha(x+y)(y\frac{\partial}{\partial y}+z\frac{\partial}{\partial z})+\beta(t_4y+z)z\frac{\partial}{\partial z}$, for $\alpha,\beta \in \mathbb K$, not both equal to zero.
	\item [(II)] $\theta=(x+y+z)(y\frac{\partial}{\partial y} +  z\frac{\partial}{\partial z}).$
	\item [(III)] $\theta=(x+y+2z)y\frac{\partial}{\partial y} + (x+z) z\frac{\partial}{\partial z}$.
\end{itemize} 		
\end{cor}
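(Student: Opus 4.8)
The plan is to handle the three types of Theorem \ref{classification} one at a time: in each case I substitute the explicit defining linear forms into the six relations (\ref{EQdegree2})---equivalently, into the matrix $\mathcal{M}$ assembled from $\mathcal{N}_{\mathrm{i}},\mathcal{N}_{\mathrm{ii}},\mathcal{N}_{\mathrm{iii}},\mathcal{N}_{\mathrm{iv}}$ according to Remark \ref{divisonlines}---and solve the resulting homogeneous linear system in $q_1,q_2,q_3,r_1,r_2,r_3$; the sought derivation is then $\theta=(q_1x+q_2y+q_3z)\,y\frac{\partial}{\partial y}+(r_1x+r_2y+r_3z)\,z\frac{\partial}{\partial z}$. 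Since Theorem \ref{classification} guarantees $r(\mathcal{A})=2$ for each of these arrangements, the null space of $\mathcal{M}$ is nonzero, so it only remains to compute it.

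First I would sort the non-coordinate lines of each arrangement into the four cases of Remark \ref{divisonlines}. For type (I) these are the line $x+y$, which falls under case i) with $a_j=1$, and the lines $t_jy+z$, which fall under case iii) with parameter $b_n=t_j$; the system then reads $q_3=0$, $q_1=q_2=r_1$, and $q_2-r_2+t_jr_3=0$ for every $j$. If there are at least two (necessarily distinct) parameters $t_j$, i.e. $s\geq 5$, these force $r_3=0$ and $r_2=q_2$, so the solution space is spanned by $(1,1,0,1,1,0)$ and $\theta=(x+y)(y\frac{\partial}{\partial y}+z\frac{\partial}{\partial z})$; when there is a single line $t_4y+z$, i.e. $s=4$, the solution space becomes two-dimensional, spanned additionally by $(0,0,0,0,t_4,1)$, which gives the stated pencil $\alpha(x+y)(y\frac{\partial}{\partial y}+z\frac{\partial}{\partial z})+\beta(t_4y+z)z\frac{\partial}{\partial z}$. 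For type (II) the extra lines are $x+y+z$ (case iv)) and the $t_jy+z$ (case iii)); case iii) contributes $q_1=r_1$ and $q_2-r_2=t_j(q_3-r_3)$ while case iv) contributes $q_1=q_2$, $r_1=r_3$ and $q_2-r_2=q_3-r_3$, and together these collapse to $q_1=q_2=q_3=r_1=r_2=r_3$, giving $\theta=(x+y+z)(y\frac{\partial}{\partial y}+z\frac{\partial}{\partial z})$. For type (III) the extra lines are $x+z$ (case ii), $a_l=1$), $y+z$ (case iii), $b_n=1$) and $x+y+z$ (case iv)); these yield $r_2=0$, $r_1=r_3=q_1=q_2$ and $q_2-r_2=q_3-r_3$, hence $q_3=2r_1$, so the one-dimensional solution space is spanned by $(1,1,2,1,0,1)$ and $\theta=(x+y+2z)y\frac{\partial}{\partial y}+(x+z)z\frac{\partial}{\partial z}$.

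In each case I would close by observing that the derivation obtained has vanishing $\frac{\partial}{\partial x}$-coefficient and is nonzero, hence is not a multiple of $\theta_E$; together with $r(\mathcal{A})=2$ this certifies that it is a minimal (degree $2$) logarithmic derivation of $\mathcal{A}$, and not merely an element of $\mathrm{Derlog}(\mathcal{A})$ of degree $2$. As a sanity check one can instead verify $\theta(l)\in\langle l\rangle$ directly on the few non-coordinate lines $l$; e.g. for type (III) one has $\theta(x+z)=(x+z)z$ and $\theta(y+z)=(y+z)(x+y+z)$.

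The main difficulty here is bookkeeping, not mathematics: one has to assign each line of each arrangement to the correct case of Remark \ref{divisonlines} (using the normalizations $b_j=1$ or $c_i=1$ made there), write down the corresponding rows of $\mathcal{M}$, and track how the number of parameters $t_j$ affects the rank---in particular the $s=4$ versus $s\geq 5$ split in type (I). One should also check that the coordinate lines $x,y,z$ impose no constraints beyond those already used to reduce $\theta$ to the form $Q'y\frac{\partial}{\partial y}+R'z\frac{\partial}{\partial z}$, and that each line $t_jy+z$ produces an equation of the same shape, so that no hidden relation is overlooked.
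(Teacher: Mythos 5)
Your proposal is correct and follows essentially the same route as the paper: both apply Remark \ref{divisonlines} to the relations (\ref{EQdegree2}) for each type in Theorem \ref{classification} and solve the resulting homogeneous linear system in $q_1,\ldots,r_3$ (the paper displays this as explicit row reduction of the concatenated matrix $\mathcal{M}$, while you solve the equations directly), including the correct rank drop at $s=4$ in type (I). The one point worth making explicit is that the collapse to $q_1=\cdots=r_3$ in type (II) uses $t_j\neq 1$, exactly as the paper notes in its $s=4$ computation.
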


\begin{proof}
\begin{itemize}
		\item [(I)] For this type the associated matrix $\mathcal{M}$ has the following shape:

		$$\mathcal{M}_\mathrm{I}= \left[\begin{array}{cccccc}
    	0&0&1&0&0&0\\
		1&-1&0&0&0&0\\
        1&0&0&-1&0&0\\
		0&1&-t_4&0&-1&t_4\\
		\vdots&\vdots&\vdots&\vdots&\vdots&\vdots\\ 	
		0&1&-t_j&0&-1&t_j\\
		\vdots&\vdots&\vdots&\vdots&\vdots&\vdots\\
		0&1&-t_s&0&-1&t_s\\
		\end{array}\right]_{s\times 6}\Longleftrightarrow \left[\begin{array}{cccccc}
		0&0&1&0&0&0\\
		1&-1&0&0&0&0\\
		1&0&0&-1&0&0\\
		0&1&0&0&-1&t_4\\
        0&0&0&0&0&t_5-t_4\\
		\vdots&\vdots&\vdots&\vdots&\vdots&\vdots\\ 	
		0&0&0&0&0&t_j-t_4\\
		\vdots&\vdots&\vdots&\vdots&\vdots&\vdots\\
		0&0&0&0&0&t_s-t_4\\
		\end{array}\right]_{s\times 6}.$$ This leads to the corresponding shape of the logarithmic derivation.

		For $s\geq 5$, since $t_j-t_4 \neq 0 ~\forall~j$, we have: \hspace{3cm}For $s=4$, we have:
		
		$\mathcal{M}_\mathrm{I}= \left[\begin{array}{cccccc}
		0&0&1&0&0&0\\
		1&-1&0&0&0&0\\
		1&0&0&-1&0&0\\
		0&1&0&0&-1&0\\
		0&0&0&0&0&1\\
		0&0&0&0&0&0\\
		\vdots&\vdots&\vdots&\vdots&\vdots&\vdots\\
		0&0&0&0&0&0\\
		\end{array}\right]_{s\times 6}.\hspace*{1.5cm}\mathcal{M}_\mathrm{I}= \left[\begin{array}{cccccc}
		0&0&1&0&0&0\\
		1&-1&0&0&0&0\\
		1&0&0&-1&0&0\\
		0&1&0&0&-1&t_4\\
		\end{array}\right]_{5\times 6}.$ This leads to the corresponding shape of the logarithmic derivation.
		
		\item [(II)] For this type the associated matrix $\mathcal{M}$ has the following shape:
		
		$$\mathcal{M}_\mathrm{II}= \left[\begin{array}{cccccc}
        1&0&0&-1&0&0\\
        0&1&-t_4&0&-1&t_4\\
        \vdots&\vdots&\vdots&\vdots&\vdots&\vdots\\ 	
        0&1&-t_j&0&-1&t_j\\
        \vdots&\vdots&\vdots&\vdots&\vdots&\vdots\\
        0&1&-t_s&0&-1&t_s\\
		1&-1&0&0&0&0\\
		0&0&0&1&0&-1\\
		0&-1&1&0&1&-1\\
		\end{array}\right]_{(s+1)\times 6}\Longleftrightarrow  \left[\begin{array}{cccccc}
		1&0&0&-1&0&0\\
		0&1&-t_4&0&-1&t_4\\
     	0&0&t_4-t_5&0&0&t_5-t_4\\
		\vdots&\vdots&\vdots&\vdots&\vdots&\vdots\\ 	
		0&0&t_4-t_j&0&0&t_j-t_4\\
		\vdots&\vdots&\vdots&\vdots&\vdots&\vdots\\
		0&0&t_4-t_s&0&0&t_s-t_4\\
		1&-1&0&0&0&0\\
		0&0&0&1&0&-1\\
		0&-1&1&0&1&-1\\
		\end{array}\right]_{(s+1)\times 6}.$$

		For $s\geq 5$, since $t_j-t_4 \neq 0$ for all $j$, we have $\mathcal{M}_\mathrm{II}= \left[\begin{array}{cccccc}
		1&0&0&-1&0&0\\
		0&1&0&0&-1&0\\
		0&0&-1&0&0&1\\
		1&-1&0&0&0&0\\
		0&0&0&1&0&-1\\
		0&0&0&0&0&0\\
		\vdots&\vdots&\vdots&\vdots&\vdots&\vdots\\
		0&0&0&0&0&0\\
		\end{array}\right]_{(s+1)\times 6}.$

		For $s=4$:\\ $\mathcal{M}_\mathrm{II}= \left[\begin{array}{cccccc}
		1&0&0&-1&0&0\\
		0&1&-t_4&0&-1&t_4\\
		1&-1&0&0&0&0\\
		0&0&0&1&0&-1\\
		0&-1&1&0&1&-1\\
		\end{array}\right]_{5\times 6} \Longleftrightarrow  \left[\begin{array}{cccccc}
		1&0&0&-1&0&0\\
		0&1&-t_4&0&-1&t_4\\
		1&-1&0&0&0&0\\
		0&0&0&1&0&-1\\
		0&0&1-t_4&0&0&t_4-1\\
		\end{array}\right]_{5\times 6}.$ Since $t_4\neq 1$, we obtain the corresponding shape of the logarithmic derivation.

		\item [(III)] For this type the associated matrix $\mathcal{M}$ has the following shape:

				$$\mathcal{M}_\mathrm{III}= \left[\begin{array}{cccccc}
		0&0&0&0&1&0\\
		0&0&0&1&0&-1\\
		1&0&0&-1&0&0\\
		0&1&-1&0&-1&1\\
		1&-1&0&0&0&0\\
		0&0&0&1&0&-1\\
		0&-1&1&0&1&-1\\
		\end{array}\right]_{7\times 6}\Longleftrightarrow\left[\begin{array}{cccccc}
		0&0&0&0&1&0\\
		0&0&0&1&0&-1\\
		1&0&0&-1&0&0\\
		0&1&-1&0&0&1\\
		1&-1&0&0&0&0\\
		0&0&0&0&0&0\\
		0&0&0&0&0 &0\\
		\end{array}\right]_{7\times 6}.$$ This leads to the corresponding shape of the logarithmic derivation.
	\end{itemize}
	
\end{proof}

\subsection{Degree 3 logarithmic derivations}

In this section we reproduce the same argument as in the previous section, but for degree 3 logarithmic derivations. Let $\theta=Q'y\frac{\partial}{\partial y}+R'z\frac{\partial}{\partial z}~ \mathrm{with}~ Q',R' \in S_2$ be a degree 3 logarithmic derivation of $\mathcal{A}$. For each $i\in \{1,\ldots,s\}$ we have $\theta(l_i)=l_iT_i$ for some $T_i\in S_2$.

Let $Q'=q_1x^2+q_2y^2+q_3z^2+q_4xy+q_5xz+q_6yz,R'=r_1x^2+r_2y^2+r_3z^2+r_4xy+r_5xz+r_6yz,~T_i=t^i_1x^2+t^i_2y^2+t^i_3z^2+t^i_4xy+t^i_5xz+t^i_6yz \in k[x,y,z]_2.$ Therefore,
$$b_iyQ'+c_izR'=(a_ix+b_iy+c_iz)T_i.$$

Comparing coefficients, the following ten relations are obtained
\begin{eqnarray}\label{EQdegree3}
\displaystyle \Biggl\{\begin{array}{ccc}
&&\\
0=a_it^i_1     & \hspace{1.1cm}b_iq_4=a_it^i_2+b_it^i_4 & b_iq_3+c_ir_6=b_it^i_3+c_it^i_6 \\
b_iq_2=b_it^i_2& \hspace{1.1cm}c_ir_5=a_it^i_3+c_it^i_5 & b_iq_6+c_ir_2=b_it^i_6+c_it^i_2 \\
c_ir_3=c_it^i_3& \hspace{1.1cm}b_iq_1=a_it^i_4+b_it^i_1 & \hspace{1.1cm}b_iq_5+c_ir_4=a_it^i_6+b_it^i_5+c_it^i_4 \\
               & \hspace{1.1cm}c_ir_1=a_it^i_5+c_it^i_1 &
\end{array}
\end{eqnarray}

If we projectively eliminate $t_1^i,t_2^i,t_3^i$ in (\ref{EQdegree3}), we obtain the following result which is similar to Remark \ref{comput-quadratic}.

\begin{lem}\label{dual-points} Let $\mathcal A\subset\mathbb P^2$ be a line arrangement with defining linear forms $l_1=x$, $l_2=y$, $l_3=z$, and $l_i=a_ix+b_iy+c_iz, i\geq 4$. Let $l_1^{\vee}:=[1,0,0], l_2^{\vee}:=[0,1,0], l_3^{\vee}:=[0,0,1], l_i^{\vee}:=[a_i,b_i,c_i], i\geq 4$ be the points dual to the lines of $\mathcal A$. Let $\mathcal A^{\vee}:=\{l_1^{\vee},\ldots,l_s^{\vee}\}$.

Then, $\mathcal A$ has a cubic logarithmic derivation that is not a multiple of the Euler derivation if and only if $\mathcal A^{\vee}$ is included in $V(F_1,F_2,F_3,F_4)$, where
\begin{eqnarray}
F_1&=& xy(q_2x^2-q_4xy+q_1y^2)\nonumber\\
F_2&=& xz(r_3x^2-r_5xz+r_1z^2)\nonumber\\
F_3&=& yz[(q_3-r_3)y^2-(q_6-r_6)yz+(q_2-r_2)z^2]\nonumber\\
F_4&=&xyz[(q_3-2r_3)xy^2-(q_2-r_6)xyz-(q_5-r_5)y^2z+(q_4-r_4)yz^2],\nonumber
\end{eqnarray} where $q_i,r_j\in\mathbb K$ are not all zero, and they will be the coefficients of $Q'$ and $R'$ shown above.
\end{lem}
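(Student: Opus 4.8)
The plan is to mimic, degree by degree, the elimination behind Remark~\ref{comput-quadratic} (i.e. \cite[Corollary 2.2]{To}): reduce to one line at a time, eliminate the cofactor coefficients $t^i_1,\dots,t^i_6$ from the linear system (\ref{EQdegree3}), and read off the resulting conditions on $q_1,\dots,q_6,r_1,\dots,r_6$ as the vanishing of $F_1,F_2,F_3,F_4$ at the dual points.

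First I would dispose of the coordinate lines and of the ``multiple of $\theta_E$'' clause. A derivation of the reduced shape $\theta=Q'y\frac{\partial}{\partial y}+R'z\frac{\partial}{\partial z}$ automatically satisfies $\theta(l_1)=0\in\langle x\rangle$, $\theta(l_2)=Q'y\in\langle y\rangle$, $\theta(l_3)=R'z\in\langle z\rangle$, so $l_1,l_2,l_3$ impose no condition; correspondingly $l_1^\vee,l_2^\vee,l_3^\vee$ lie on $V(F_1,F_2,F_3,F_4)$ no matter what the $q_j,r_j$ are, since each $F_k$ is divisible by two of $x,y,z$. Moreover the representative of a class in $\mathrm{Derlog}(\mathcal A)/S\theta_E$ having this reduced shape is unique, so such a $\theta$ fails to be a multiple of $\theta_E$ exactly when $(q_1,\dots,q_6,r_1,\dots,r_6)\neq 0$. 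It therefore remains to characterize, for each $i\geq 4$, when $l_i\mid\theta(l_i)=b_iyQ'+c_izR'$.

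Fix $i\geq 4$ and put $g_i:=b_iyQ'+c_izR'$, a cubic form. The condition $l_i\mid g_i$ is equivalent to $g_i$ vanishing on the line $V(l_i)\cong\mathbb P^1$, i.e. to the four coefficients of the restricted binary cubic vanishing; equivalently, to solvability of the $10\times 6$ system (\ref{EQdegree3}) in $t^i_1,\dots,t^i_6$, whose coefficient matrix has rank $6$ (checked case by case along the partition of Remark~\ref{divisonlines}), so that solvability amounts to four compatibility relations. Three of these come out cleanly: evaluating $g_i$ at the points where $V(l_i)$ meets the coordinate lines, one finds that $g_i$ vanishes at $V(l_i)\cap\{z=0\}$ iff $F_1(l_i^\vee)=0$, at $V(l_i)\cap\{y=0\}$ iff $F_2(l_i^\vee)=0$, and at $V(l_i)\cap\{x=0\}$ iff $F_3(l_i^\vee)=0$ (each up to a nonzero monomial in $a_i,b_i,c_i$); equivalently, they are obtained by solving for the $t^i_j$ out of a suitable six of the ten rows of (\ref{EQdegree3}) and substituting back. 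Once these three hold, $g_i|_{V(l_i)}$ is proportional to $(xyz)|_{V(l_i)}$ (the latter vanishing precisely at those same three points), and the proportionality scalar — delivered by the remaining $xyz$-row of (\ref{EQdegree3}) after using the first three relations — is, up to a nonzero factor, $F_4(l_i^\vee)$. Hence $l_i\mid\theta(l_i)\iff l_i^\vee\in V(F_1,F_2,F_3,F_4)$; running over all $i\geq 4$ and combining with the previous paragraph gives the lemma.

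\textbf{Main obstacle.} The delicate point is the fourth relation. The polynomial one reads off directly from the $xyz$-row of (\ref{EQdegree3}) is not literally $F_4(l_i^\vee)$; one must substitute the relations $F_1(l_i^\vee)=F_2(l_i^\vee)=F_3(l_i^\vee)=0$ into it and keep careful track of the extraneous monomial factors in $a_i,b_i,c_i$ — so that no genuine condition is lost and none is spuriously added — in order to bring it to the displayed quartic form. Checking that this whole scheme degenerates correctly when $l_i^\vee$ sits on a coordinate line (one of $a_i,b_i,c_i$ vanishing, cases i)--iii) of Remark~\ref{divisonlines}), so that only the $F_k$'s not identically zero at $l_i^\vee$ then impose conditions, is the remaining bit of bookkeeping.
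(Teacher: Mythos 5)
Your strategy is essentially the paper's own (the paper's entire proof is the single instruction to eliminate the $t^i_j$ from (\ref{EQdegree3})), fleshed out with a geometric reading that is correct where you make it precise: writing $g_i=b_iyQ'+c_izR'$, one checks directly that $g_i(b_i,-a_i,0)=-F_1(l_i^\vee)$, $g_i(c_i,0,-a_i)=-F_2(l_i^\vee)$ and $g_i(0,c_i,-b_i)=F_3(l_i^\vee)$, so the first three relations are exactly the vanishing of $g_i$ at the intersections of $V(l_i)$ with the coordinate triangle, and your reduction of the ``multiple of $\theta_E$'' clause is also fine. The difficulty is that the two items you defer as ``bookkeeping'' are precisely where the argument does not close against the statement as printed. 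Carrying out your own recipe for the fourth relation (solve six rank-$6$ rows of (\ref{EQdegree3}) for $t^i_1,\dots,t^i_6$, substitute into the $xyz$-row, and simplify using the first three relations) yields, for a line with $a_ib_ic_i\neq 0$,
$$a_ib_i^2(q_3-2r_3)-a_ic_i^2q_2+a_ib_ic_ir_6-b_i^2c_i(q_5-r_5)+b_ic_i^2(q_4-r_4)=0,$$
so the bracket of $F_4$ has to be $(q_3-2r_3)xy^2-q_2xz^2+r_6xyz-(q_5-r_5)y^2z+(q_4-r_4)yz^2$; it is \emph{not} the displayed $-(q_2-r_6)xyz$ term. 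This is not a harmless normalization: take $Q'=xy+y^2$, $R'=y^2$ and the line $x+y+2z$. Then $\theta(x+y+2z)=xy^2+y^3+2y^2z=y^2(x+y+2z)$, so $\theta$ is a genuine cubic logarithmic derivation of $\{x,y,z,x+y+2z\}$ that is not a multiple of $\theta_E$, yet $F_4=xy^2z^2(z-x)$ for these coefficients and $F_4(1,1,2)=4\neq 0$. So your claim that the proportionality scalar ``is, up to a nonzero factor, $F_4(l_i^\vee)$'' is false for the printed $F_4$, and even the ``only if'' direction fails.

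The degenerate cases do not ``degenerate correctly'' either. For a line $l_i=a_ix+b_iy$ (case i) of Remark~\ref{divisonlines}) the divisibility $l_i\mid b_iyQ'$ imposes three conditions, namely $q_3=0$, $b_i^2q_1+a_i^2q_2-a_ib_iq_4=0$ and $b_iq_5-a_iq_6=0$; but at $l_i^\vee=[a_i,b_i,0]$ the polynomials $F_2,F_3,F_4$ vanish identically and $F_1$ records only the middle condition. Hence membership of $l_i^\vee$ in $V(F_1,\dots,F_4)$ does not imply $l_i\mid\theta(l_i)$, and the ``if'' direction of the biconditional (with the $q_j,r_j$ identified as the coefficients of $Q'$ and $R'$, as the lemma stipulates) cannot be recovered from the four $F_k$ alone; the analogous defect occurs in cases ii) and iii). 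In short: your method is the right one and would produce the correct elimination, but you cannot complete it to a proof of the lemma verbatim --- doing the deferred computation honestly forces you to correct $F_4$ and to supplement $V(F_1,\dots,F_4)$ with the extra conditions governing dual points on the coordinate triangle, i.e.\ to prove a repaired statement rather than the displayed one.
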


\begin{exm} \label{example2} Let us consider the following two examples that are presented at the beginning of \cite[Section 4]{To1}.\footnote{They are examples denoted there with $\mathcal A_2$, and $\mathcal A_6$; in the later we performed a change of variables $x+y+z\leftrightarrow x, y\leftrightarrow y, z\leftrightarrow z$, so it will fit into the statement of Lemma \ref{dual-points}.}
$$\mathcal B_1:=V(xyz(x-z)(x-2z)(y-z)(y-2z))$$
$$\mathcal B_2:=V(xyz(x-2y-2z)(2x+2y-z)(2x-y+2z)(3x-12y-4z)).$$ $\mathcal B_1$ has two minimal cubic logarithmic derivations (actually it is free with exponents $(1,3,3)$), whereas $\mathcal B_2$ has six minimal degree five logarithmic derivations (generating the first syzygy module of the Jacobian ideal), so no cubic logarithmic derivation. But the minimal graded free resolutions of the ideals of $\mathcal B_1^{\vee}$ and of $\mathcal B_2^{\vee}$, respectively, are the same:

$$0\longrightarrow S^2(-5)\longrightarrow S(-2)\oplus S^2(-4).$$

In Example \ref{example1}, though there are no cubic logarithmic derivations other than multiples of $\theta_E$, it is worth mentioning that $I(\mathcal A_1^{\vee})$ and $I(\mathcal A_2^{\vee})$ are both ideal complete intersection of two cubic forms. To have the same minimal graded free resolutions is expected to happen, since the two arrangements have the ``same pictures'' (same combinatorics). Is it possible to find an example of two line arrangements having the same combinatorics, but different graded minimal free resolutions of the ideals of the points dual to the lines?
\end{exm}

\bigskip

Applying Remark \ref{divisonlines} to (\ref{EQdegree3}) and eliminating the parameters $t_1^i,t_2^i,t_3^i$, we have the following equations for $q_1,\ldots,q_6,r_1,\ldots,r_6$ and their associated matrices  $\mathcal{N}_{\mathrm{i}},\mathcal{N}_{\mathrm{ii}},\mathcal{N}_{\mathrm{iii}},\mathcal{N}_{\mathrm{iv}}$ respectively.

$\bullet$ \underline{Situation i)}: for $4\leq j \leq \mathfrak{j}$ we have $q_3=0,~q_1+a_j^2q_2-a_jq_4=0$ and $q_5-a_jq_6=0$ with matrix

$$\mathcal{N}_{\mathrm{i}}=\left[\begin{array}{cccccccccccc}
0&0&1&0&0&0&0&0&0&0&0&0\\
1&a_4^2&0&-a_4&0&0&0&0&0&0&0&0\\
\vdots&\vdots&\vdots&\vdots&\vdots&\vdots&\vdots&\vdots&\vdots&\vdots&\vdots&\vdots\\ 	
1&a_j^2&0&-a_j&0&0&0&0&0&0&0&0\\
\vdots&\vdots&\vdots&\vdots&\vdots&\vdots&\vdots&\vdots&\vdots&\vdots&\vdots&\vdots\\
1&a_{\mathfrak{j}}^2&0&-a_{\mathfrak{j}}&0&0&0&0&0&0&0&0\\
0&0&0&0&1&-a_4&0&0&0&0&0&0\\
\vdots&\vdots&\vdots&\vdots&\vdots&\vdots&\vdots&\vdots&\vdots&\vdots&\vdots&\vdots\\
0&0&0&0&1&-a_j&0&0&0&0&0&0\\
\vdots&\vdots&\vdots&\vdots&\vdots&\vdots&\vdots&\vdots&\vdots&\vdots&\vdots&\vdots\\
0&0&0&0&1&-a_{\mathfrak{j}}&0&0&0&0&0&0
\end{array}\right].$$

$\bullet$ \underline{Situation ii)}: for $\mathfrak{j}+1\leq l \leq \mathfrak{l}$ we have $r_2=0,~r_1+a_l^2r_3-a_lr_5=0$ and $r_4-a_lr_6=0$  with matrix
$$\mathcal{N}_{\mathrm{ii}}=\left[\begin{array}{cccccccccccc}
0&0&0&0&0& 0&0&1&0&0&0&0\\
0&0&0&0&0&0&1& 0&a_{\mathfrak{j}+1}^2&0&-a_{\mathfrak{j}+1}&0\\
\vdots&\vdots&\vdots&\vdots&\vdots&\vdots&\vdots&\vdots&\vdots&\vdots&\vdots&\vdots\\ 	
0&0&0&0&0&0&1&0&a_l^2&0&-a_l&0\\
\vdots&\vdots&\vdots&\vdots&\vdots&\vdots&\vdots&\vdots&\vdots&\vdots&\vdots\\ 	
0&0&0&0&0&0&1&0&a_{\mathfrak{l}}^2&0&-a_{\mathfrak{l}}&0\\
0&0&0&0&0&0&0&0&0&1&0&-a_{\mathfrak{j}+1}\\
\vdots&\vdots&\vdots&\vdots&\vdots&\vdots&\vdots&\vdots&\vdots&\vdots&\vdots&\vdots\\ 	
0&0&0&0&0&0&0&0&0&1&0&-a_l\\
\vdots&\vdots&\vdots&\vdots&\vdots&\vdots&\vdots&\vdots&\vdots&\vdots&\vdots&\vdots\\ 	
0&0&0&0&0&0&0&0&0&1&0&-a_{\mathfrak{l}}\\
\end{array}\right].$$

$\bullet$ \underline{Situation iii)}: for $\mathfrak{l}+1\leq n \leq \mathfrak{n}$ we have $q_1-r_1=0,~q_2-r_2+b_n^2(q_3-r_3)-b_n(q_6-r_6)=0$ and $(q_4-r_4)-b_n(q_5-r_5)=0$ and  with matrix
$$\mathcal{N}_{\mathrm{iii}}=\left[\begin{array}{cccccccccccc}
1&0&0&0&0&0&-1&0&0& 0&0&0\\
0&1&b_{\mathfrak{l}+1}^2&0&0&-b_{\mathfrak{l}+1}&0&-1&-b_{\mathfrak{l}+1}^2& 0&0&b_{\mathfrak{l}+1}\\
\vdots&\vdots&\vdots&\vdots&\vdots&\vdots&\vdots&\vdots&\vdots&\vdots&\vdots&\vdots\\ 	
0&1&b_{n}^2&0&0&-b_{n}&0&-1&-b_{n}^2& 0&0&b_{n}\\
\vdots&\vdots&\vdots&\vdots&\vdots&\vdots&\vdots&\vdots&\vdots&\vdots&\vdots&\vdots\\ 	
0&1&b_{\mathfrak{n}}^2&0&0&-b_{\mathfrak{n}}&0&-1&-b_{\mathfrak{n}}^2& 0&0&b_{\mathfrak{n}}\\
0&0&0&1&-b_{\mathfrak{l}+1}&0&0&0&0&-1&b_{\mathfrak{l}+1}&0\\
\vdots&\vdots&\vdots&\vdots&\vdots&\vdots&\vdots&\vdots&\vdots&\vdots&\vdots&\vdots\\ 	
0&0&0&1&-b_n&0&0&0&0&-1&b_n&0\\
\vdots&\vdots&\vdots&\vdots&\vdots&\vdots&\vdots&\vdots&\vdots&\vdots&\vdots&\vdots\\ 	
0&0&0&1&-b_{\mathfrak{n}}&0&0&0&0&-1&b_{\mathfrak{n}}&0
\end{array}\right].$$

$\bullet$ \underline{Situation iv)}: for $\mathfrak{n}+1\leq i \leq s$ we have $b_i^2q_1+a_i^2q_2-a_ib_iq_4=0,~r_1+a_i^2r_3-a_ir_5=0,~q_2-b_i^2r_3-b_i(q_6-r_6)=0$ and $-a_iq_2-2a_ib_i^2r_3+a_ib_ir_6+b_i(q_4-r_4)-b_i^2(q_5-r_5)=0$ with matrix $\mathcal{N}_{\mathrm{iv}}$ obtained by concatenating the following matrices:
$$\mathcal{T}=\left[\begin{array}{cccccccccccc}
b_{\mathfrak{n}+1}^2&a_{\mathfrak{n}+1}^2&0&-a_{\mathfrak{n}+1}b_{\mathfrak{n}+1}&0&\cdots&0\\
\vdots&\vdots&\vdots&\vdots&\vdots&&\vdots\\ 	
b_i^2&a_i^2&0&-a_ib_i&0&\cdots&0\\
\vdots&\vdots&\vdots&\vdots&\vdots&&\vdots\\
b_{s}^2&a_{s}^2&0&-a_{s}b_{s}&0&\cdots&0\\
\end{array}\right],~\mathcal{U}=\left[\begin{array}{cccccccccccc}
0&\cdots&0&1& 0&a_{\mathfrak{n}+1}^2&0&-a_{\mathfrak{n}+1}&0\\
\vdots&&\vdots&\vdots&\vdots&\vdots&\vdots\\ 	
0&\cdots&0&1&0&a_i^2&0&-a_i&0\\
\vdots&&\vdots&\vdots&\vdots&\vdots&\vdots\\ 	
0&\cdots&0&1&0&a_{s}^2&0&-a_{s}&0\\
\end{array}\right],$$

$$\mathcal{V}=\left[\begin{array}{cccccccccccc}
0&1&0&0&0&-b_{\mathfrak{n}+1}&0&0&-b_{\mathfrak{n}+1}^2& 0&0&b_{\mathfrak{n}+1}\\
\vdots&\vdots&\vdots&\vdots&\vdots&\vdots&\vdots&\vdots&\vdots&\vdots&\vdots&\vdots\\ 	
0&1&0&0&0&-b_{i}&0&0&-b_{i}^2& 0&0&b_{i}\\
\vdots&\vdots&\vdots&\vdots&\vdots&\vdots&\vdots&\vdots&\vdots&\vdots&\vdots&\vdots\\ 	
0&1&0&0&0&-b_{s}&0&0&-b_{s}^2& 0&0&b_{s}\\
\end{array}\right],$$

$$  \mathcal{W}= \left[\begin{array}{cccccccccccc}
0&-a_{\mathfrak{n}+1}&0&b_{\mathfrak{n}+1}&-b_{\mathfrak{n}+1}^2&0&0&0&-2a_{\mathfrak{n}+1}b_{\mathfrak{n}+1}^2&-b_{\mathfrak{n}+1}&b_{\mathfrak{n}+1}^2&a_{\mathfrak{n}+1}b_{\mathfrak{n}+1}\\
\vdots&\vdots&\vdots&\vdots&\vdots&\vdots&\vdots&\vdots&\vdots&\vdots&\vdots&\vdots\\ 	
0&-a_i&0&b_i&-b_i^2&0&0&0&-2a_ib_i^2&-b_i&b_i^2&a_ib_i\\
\vdots&\vdots&\vdots&\vdots&\vdots&\vdots&\vdots&\vdots&\vdots&\vdots&\vdots&\vdots\\ 	
0&-a_s&0&b_s&-b_s^2&0&0&0&-2a_sb_s^2&-b_s&b_s^2&a_sb_s\\
\end{array}\right].$$

\vspace{0.3cm}

We can represent all the equations above by using the following null product of representative matrix:

$$\mathcal{M}_{(3(s-1)-\mathfrak{n})\times 12}\cdot \left[\begin{array}{llllllllllll}
q_1&q_2&q_3&q_4&q_5&q_6&r_1&r_2&r_3&r_4&r_5&r_6
\end{array}\right]_{12\times 1}^{t}=0,$$
where the matrix $\mathcal{M}:=\mathcal{M}_{(3(s-1)-\mathfrak{n})\times 12}$ is the concatenation of the matrices $\mathcal{N}_{\mathrm{i}},\mathcal{N}_{\mathrm{ii}},\mathcal{N}_{\mathrm{iii}},\mathcal{N}_{\mathrm{iv}}$. This linear system has non trivial solution if and only if $\mathrm{rank}(\mathcal{M})<\min\{3(s-1)-\mathfrak{n},12\}.$

Using the classification of all line arrangements in $\mathbb{P}^2$ with $r(\mathcal{A})=3$ by (\ref{cubic}) we can determine the shape of each degree 3 logarithmic derivation associated to each the defining polynomial.

This comes handy especially when dealing with cases (IIIa)-(Vb), because $s$ has specific values. For example, the associated matrix $\mathcal{M}_{\mathrm{Vb}}$  to the type (Vb) $F=xyz(x+y)(x+z)(-x+z)(y+z)(-y+z)$ (here we changed $x\mapsto -x$) is

{\small$$\left[\begin{array}{cccccccccccc}
0&0&1&0&0&0&0&0&0&0&0&0\\
1&1&0&-1&0&0&0&0&0&0&0&0\\
0&0&0&0&1&-1&0&0&0&0&0&0\\
0&0&0&0&0& 0&0&1&0&0&0&0\\
0&0&0&0&0&0&1& 0&1&0&-1&0\\
0&0&0&0&0&0&1&0&1&0&1&0\\
0&0&0&0&0&0&0&0&0&1&0&-1\\
0&0&0&0&0&0&0&0&0&1&0&1\\
1&0&0&0&0&0&-1&0&0& 0&0&0\\
0&1&1&0&0&-1&0&-1&-1& 0&0&1\\
0&1&1&0&0&1&0&-1&-1& 0&0&-1\\
0&0&0&1&1&0&0&0&0&1&-1&0\\
0&0&0&1&-1&0&0&0&0&1&1&0\\
\end{array}\right]\Longleftrightarrow\left[\begin{array}{cccccccccccc}
0&0&1&0&0&0&0&0&0&0&0&0\\
1&1&0&0&0&0&0&0&0&0&0&0\\
0&0&0&1&0&0&0&0&0&0&0&0\\
0&0&0&0&1&0&0&0&0&0&0&0\\
0&0&0&0&0&1&0&0&0&0&0&0\\
1&0&0&0&0&0&-1&0&0&0&0&0\\
0&0&0&0&0&0&0&1&0&0&0&0\\
0&0&0&0&0&0&1&0&1&0&0&0\\
0&1&0&0&0&0&0&0&-1&0&0&0\\
0&0&0&0&0&0&0&0&0&1&0&0\\
0&0&0&0&0&0&0&0&0&0&1&0\\
0&0&0&0&0&0&0&0&0&0&0&1\\
0&0&0&0&0&0&0&0&0&0&0&0\\
\end{array}\right].$$}

Therefore, the shape of the logarithmic derivation of this type of arrangement is $$\theta=(-x^2+y^2)y\frac{\partial}{\partial y} + (-x^2+z^2)z\frac{\partial}{\partial z}.$$

\subsubsection{Non recursive cubic logarithmic derivations.}

For types (Ia) - (IIIc), the corresponding line arrangements $\mathcal A$ are obtained by adding an extra line $H=V(l)$ to line arrangements $\mathcal B$ with minimal quadratic logarithmic derivation. The minimal cubic logarithmic derivations will be obtained by simply multiplying by $l$ the logarithmic derivations obtained in Corollary \ref{quadratic-log}, except when $\mathcal B$ has also a minimal cubic logarithmic derivation.

We are interested in this exceptional case. So it may happen that $$\theta = l'\rho_2+\rho_3,$$ where $l'\in S_1$ and $\rho_2$ is a minimal quadratic logarithmic derivation, and $\rho_3$ is a minimal cubic logarithmic derivation, both of $\mathcal B$. Also, since $r(\mathcal A)=3$, then $\rho_2(l)\notin \langle l\rangle$.

From Corollary \ref{quadratic-log}, we know the shape of $\rho_2$. So we are left to finding how $\rho_3$ looks like. Now we can use the shape of the defining polynomials of $\mathcal B$ as expressed in Theorem \ref{classification}, together with the matrix approach we have been discussing in this section.

\medskip

$\bullet$ If $\mathcal B$ is of type (I), then it is supersolvable, and we want it to have exponents $(1,2,3)$. Then its defining polynomial is $xyz(x+y)(t_4y+z)(t_5y+z)$. After some change of variables, we can assume this polynomial is $xyz(x+y)(y+z)(ty+z)$, where $t\neq 0,1$. Modulo linear multiples of $\rho_2$, we have $\displaystyle\rho_3=(ty+z)(y+z)z\frac{\partial}{\partial_z}$.

After a change of coordinates, we can assume that the defining polynomial of $\mathcal A$ in Theorem \ref{cubic} is $F=xyz(x+y)(y+z)(ty+z)l$, where $l=bx+y$ for (Ia), $l=bx+z$ for (Ib) and (Ic), $l=ax+by+z$ for (Id).

The question is if we can find $l'=\alpha x+\beta y +\gamma z$ such that $l'\rho_2(l)+\rho_3(l)\in\langle l\rangle$.

\begin{itemize}
  \item[(Ia)] If $l=bx+y$, then we can take $l'=l$.
  \item[(Ib)] If $l=bx+z$, $b\neq -1,$ then, in order to have such $\theta$, $b=-t$. But with this, we get the picture of (Ic), since we have three triple points, instead of just two: $\{x,y,x+y\},\{x,z,-tx+z\},\{x+y,-tx+z,ty+z\}$.
  \item[(Ic)] If $l=-x+z$, then we can take $l'=x-ty-2z$.
  \item[(Id)] If $l=ax+by+z$, then $b=t$ or $b=1$. But in this case we get two triple points instead of just one: $\{x,y,x+y\},\{x,ax+by+z,by+z\}$, so not a type (Id) arrangement.
\end{itemize}

\medskip

$\bullet$ If $\mathcal B$ is of type (II), then, after some simple calculation with Macaulay 2 (\cite{EiGrSt}), its defining polynomial is $xyz(x+y+z)(ty+z), t\neq 0,1$. In this case $\mathcal B$ has two minimal cubic logarithmic derivations: $$\rho_3=(ty+z)(x+y+z)(\alpha y\frac{\partial}{\partial y}+\beta z\frac{\partial}{\partial z}).$$ We also know that $\rho_2=(x+y+z)(y\frac{\partial}{\partial y}+z\frac{\partial}{\partial z})$.

If $l=ax+by+cz$, then we need to have $$l'(x+y+z)(by+cz)+(ty+z)(x+y+z)(\alpha by+\beta cz)=(ax+by+cz)Q, Q\in S_2.$$ Since $\gcd(l,x+y+z)=1$, then $Q=(x+y+z)P$, where $P\in S_1$.

\begin{itemize}
  \item[(IIb)] If $c=0$ and $b=1$, then we have $$l'y+(ty+z)\alpha y=(ax+y)P,$$ leading to $P=\delta y$. Then $\alpha=0$ and $l'=\delta(ax+y)$.
  \item[(IIa)] If $c=1$, and $a$ and $b$ are general enough ($a,b\neq 0$, $b\neq t$), we have $$l'(by+z)+(ty+z)(\alpha by+\beta z)=(ax+by+cz)P.$$ After some calculations, $\beta=\alpha$, and we can choose $l'=ax+(b-\alpha t)y+(1-\alpha)z$ (in this instance $P=by+z$).
\end{itemize}

\medskip

$\bullet$ If $\mathcal B$ is of type (III), as we mention before we can obtain all the minimal cubic logarithmic derivations by matrix computations.

\section{Appendix: graphic arrangements}

Let $G=(V,E)$ be a simple (no loops, no multiple edges) undirected graph with $V=\{1,\ldots,n\}$, $n\geq 2$. The {\em graphic arrangement corresponding to $G$} is $\mathcal A(G):=\{V(x_i-x_j)|(i,j)\in E\}$. The rank of $\mathcal A(G)$ equals $n$ minus the number of connected components of $G$.

Consider the derivation $$\theta=x_1^2\frac{\partial}{\partial x_1}+\cdots+x_n^2\frac{\partial}{\partial x_n}.$$ Obviously, $\theta(x_i-x_j)=x_i^2-x_j^2=(x_i-x_j)(x_i+x_j)$, so $\theta$ is a logarithmic derivation of $\mathcal A(G)$, not a multiple of the Euler derivation. So $$r(G):=r(\mathcal A(G))\leq 2.$$ We are interested when $r(G)=1$.

Below, $\delta(G)$ denotes the minimum degree of a vertex of $G$. Suppose $G$ is connected. $S\subset V$ is called {\em vertex-cut set}, if $G-S$ is disconnected. The smallest cardinality of a vertex-cut set is called {\em vertex-connectivity} of $G$, and it is denoted $k(G)$. By convention, $k(K_n)=n-1$.

\begin{flushleft}
$\begin{array}{lc}
\hspace{0.25cm}\textrm{A vertex}~v\in V~\textrm{such that}~ G-\{v\}~\textrm{is disconnected}~(\textrm{hence}~k(G)=1)~\textrm{is} \\
\textrm{called \emph{articulated vertex}, and}~G~\textrm{with}~k(G)=1~\textrm{is called {\emph {articulated graph}}}.\\
\\
\end{array}\hspace*{0.3cm}
\begin{tikzpicture}
\draw[thick](0, 0) ellipse (1 and 0.5) (2, 0) ellipse (1 and 0.5);
\draw [thick,fill] (1, 0) circle (0.05);
\end{tikzpicture}$
\end{flushleft}

\begin{lem}\label{bowtie} Let $G=(V,E)$ be a simple undirected graph with $|E|\geq 2$ and $\delta(G)\geq 1$. If $G$ is disconnected, or if $G$ is articulated, then $r(G)=1$.
\end{lem}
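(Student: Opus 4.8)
The plan is to prove the (a priori stronger) bound $r(G)\le 1$ by writing down, in each of the two cases, an explicit logarithmic derivation $\theta$ of $\mathcal A(G)$ with $\deg\theta=1$ that is not a scalar multiple of the Euler derivation $\theta_E=\sum_{i=1}^n x_i\,\partial/\partial x_i$. Granting such a $\theta$, the splitting $\mathrm{Derlog}(\mathcal A(G))=\mathrm{Syz}(J_F)\oplus S\theta_E$ gives $\theta=\sigma+g\theta_E$ with $\sigma\in\mathrm{Syz}(J_F)$ and $g\in S$; comparing degrees forces $g$ to be a constant, and $\sigma\ne 0$ since $\theta\notin\mathbb K\theta_E$, so $\sigma$ is a nonzero element of $(\mathrm{Syz}(J_F))_1$ and hence $r(G)\le 1$. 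Together with $r(G)\le 2$ (already observed) and the lower bound $r(G)\ge 1$ (understood for the essential arrangement, i.e. after the trivial constant derivations are set aside), this yields $r(G)=1$.

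\emph{Disconnected $G$.} Let $A\subsetneq V$ be the vertex set of one connected component, so $A\ne\emptyset$, $V\setminus A\ne\emptyset$, and no edge of $G$ joins $A$ to $V\setminus A$. I would take $\theta:=\sum_{i\in A}x_i\,\partial/\partial x_i$. For an edge $(i,j)\in E$ one has $\theta(x_i-x_j)=x_i-x_j$ if $i,j\in A$ and $\theta(x_i-x_j)=0$ otherwise, so in all cases $\theta(x_i-x_j)\in\langle x_i-x_j\rangle$ and $\theta$ is logarithmic; it is not a scalar multiple of $\theta_E$ because its coefficient on $\partial/\partial x_k$ is $0$ for $k\in V\setminus A$ while $\theta\ne 0$. (Conceptually $\theta$ is the Euler derivation of one factor of the product $\mathcal A(G)=\mathcal A_1\times\mathcal A_2$, where $\mathcal A_1,\mathcal A_2$ are the graphic arrangements of the two parts of $G$.)

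\emph{Articulated $G$.} Fix a cut vertex $v$ and partition the vertex set of $G-v$ into two nonempty parts $A,B$ that are unions of connected components of $G-v$, so that $V=A\sqcup B\sqcup\{v\}$ and every edge of $G$ has both endpoints in $A\cup\{v\}$ or in $B\cup\{v\}$. Here I would take $\theta:=\sum_{i\in A}(x_i-x_v)\,\partial/\partial x_i$; the translation by $x_v$ is the one genuinely nontrivial choice, since the naive $\sum_{i\in A}x_i\,\partial/\partial x_i$ fails on the edges from $v$ into $A$. I would then verify logarithmicity edge-by-edge: on an edge inside $A$ one gets $x_i-x_j$; on an edge $(v,i)$ with $i\in A$ one gets $-(x_i-x_v)=x_v-x_i$; on any edge with both endpoints in $B\cup\{v\}$ one gets $0$; and there is no edge between $A$ and $B$. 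Thus $\theta$ is logarithmic of degree $1$, and it is not a scalar multiple of $\theta_E$ because its coefficients on the $B$-variables vanish ($B\ne\emptyset$) while $\theta\ne 0$.

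The verifications within each branch are finite, routine checks over the possible edge types, so the real content is the choice of the two derivations — above all the shifted one in the articulated case. The single point deserving a word of care is the lower bound $r(G)\ge 1$ used to upgrade $r(G)\le 1$ to equality: this is exactly where one must recall that for graphic arrangements $r$ is taken with respect to the essential arrangement, so that the ever-present degree-$0$ syzygy $\sum_i\partial/\partial x_i$ does not count toward $r(G)$.
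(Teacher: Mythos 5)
Your proof is correct, and it reaches the conclusion by a more explicit route than the paper's. The paper invokes the characterization from \cite[Section 2.1]{To} that $r(\mathcal A)=1$ if and only if $\mathcal A$ is reducible, and then verifies reducibility: immediately in the disconnected case, and in the articulated case via a change of variables adapted to a spanning tree obtained by gluing spanning trees of the two halves at the cut vertex $v$. You instead exhibit the degree-one logarithmic derivation directly, namely $\sum_{i\in A}x_i\,\partial/\partial x_i$ in the disconnected case and $\sum_{i\in A}(x_i-x_v)\,\partial/\partial x_i$ in the articulated case, and check logarithmicity edge by edge. These are really two faces of the same structural fact: in the coordinates $y_i=x_i-x_v$ your second derivation is exactly the partial Euler derivation of the factor $\mathcal A(G_1)$ of the product decomposition that the paper's spanning-tree argument produces, and the partial Euler derivation is the standard witness that a reducible arrangement has $r=1$. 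What your version buys is self-containedness (only the easy direction of the reducibility criterion is used, and it is made fully explicit); the cost is that you must supply the lower bound $r(G)\geq 1$ separately, whereas the paper gets the equality in one stroke from the cited equivalence. Your caveat about essentialization is well taken and applies equally to the paper's own statement, since $\sum_i\partial/\partial x_i$ is a degree-zero logarithmic derivation of every graphic arrangement; to be fully rigorous on this point you should also observe that your $\theta$ lies outside the span of $\theta_E$ and the trivial derivations $S_1\cdot\sum_i\partial/\partial x_i$ (it does, because its coefficients vanish identically on the nonempty set of vertices outside $A$), so that it survives essentialization as a genuine degree-one syzygy. This is a refinement the paper does not make either, so it is not a gap relative to the intended statement.
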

\begin{proof} To prove the lemma, we will appeal to the discussion in \cite[Section 2.1]{To}, where it says that a hyperplane arrangement $\mathcal A$ has $r(\mathcal A)=1$ if and only if $\mathcal A$ is reducible (i.e., $\mathcal A=\mathcal A_1\times\mathcal A_2$).

If $G$ is disconnected with components $G_1$ and $G_2$, then the defining polynomial of $\mathcal A(G)$ is the product of the defining polynomials of $\mathcal A(G_1)$ and $\mathcal A(G_2)$, and each of these is expressed in different sets of variables. Hence $\mathcal A(G)$ is reducible.

\medskip

Suppose $G$ is articulated at a vertex $v$, with the two induced (connected) subgraphs $G_1$ and $G_2$, such that $G=G_1\cup G_2$ and $G_1\cap G_2=\{v\}$.

Let $T$ be a spanning tree for $G$. Then we can make a change of variables such that for each $(i,j)$ edge of $T$, $x_i-x_j$ gets assigned a new variable. Since $T$ is obtained by removing efficiently edges from $G$ to ``destroy'' cycles, then all forms $x_a-x_b$ corresponding to other edges $(a,b)$ of $G$ will be linear combinations of these new variables.

But $T$ consists of a spanning tree of $G_1$ and a spanning tree of $G_2$ glued at the vertex $v$. Using the change of variables associated to spanning trees as above, the claim follows (there is no cycle of $G$ consists of edges from both $G_1$ and $G_2$).
\end{proof}

The converse is the following. But first, we summarize \cite[Proposition 2.87]{OrTe}: Let $e$ be an edge of $G$, and let $G':=G-e$ (the deletion of the edge $e$), and let $G'':= G/e$ (the contraction w.r.t. the edge $e$). If $H\in \mathcal A:=\mathcal A(G)$ is the hyperplane corresponding to $e$, then $$\mathcal A':=\mathcal A\setminus\{H\} = \mathcal A(G') \mbox{ and }\mathcal A'':=\mathcal A^H=\mathcal A(G'').$$

\begin{lem} Let $G=(V,E)$ be a simple connected graph with $s:=|E|\geq 2$. If $r(G)=1$, then $G$ is articulated.
\end{lem}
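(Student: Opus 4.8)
The plan is to combine the characterization ``$r(\mathcal A)=1$ if and only if $\mathcal A$ is reducible'' (already used in the proof of Lemma~\ref{bowtie}) with a translation of reducibility of a graphic arrangement into graph-theoretic terms, and then extract a cut vertex.

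Assume $r(G)=1$, so $\mathcal A(G)$ is a nontrivial product. Concretely, writing $W:=\mathrm{span}_{\mathbb K}\{x_i-x_j:(i,j)\in E\}$ (a space of dimension $n-1$, since $G$ is connected), this means $W=W_1\oplus W_2$ with $W_1,W_2\neq 0$ and every edge-form $x_i-x_j$ lying entirely in $W_1$ or in $W_2$. Color an edge \emph{red} if its form lies in $W_1$ and \emph{blue} otherwise; then both colors occur, and a one-line argument identifies $W_1$ (resp.\ $W_2$) with the span of the red (resp.\ blue) forms. Let $G_R,G_B$ be the spanning subgraphs carrying the red and the blue edges. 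Since the span of the edge-forms of any graph $H$ has dimension $|V|-c(H)$, where $c$ counts connected components (isolated vertices included), comparing dimensions in $W=W_1\oplus W_2$ gives $c(G_R)+c(G_B)=n+1$. Moreover, for each vertex $v$ its components in $G_R$ and in $G_B$ intersect only in $v$: a second common vertex $w$ would be joined to $v$ by a red path and by a blue path, forcing $x_v-x_w\in W_1\cap W_2=0$.

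Next I would produce a cut vertex. Consider the bipartite multigraph $\Gamma$ whose vertices are the connected components of $G_R$ together with those of $G_B$, with one edge per vertex $v\in V$ joining the $G_R$-component of $v$ to the $G_B$-component of $v$. Then $\Gamma$ has $n$ edges and $c(G_R)+c(G_B)=n+1$ vertices, and it is connected because $G$ is; hence $\Gamma$ is a tree, and in particular it is simple, i.e.\ a red component and a blue component meet in at most one vertex of $G$. Neither $G_R$ nor $G_B$ is connected (connectedness of one would force the other to be edgeless, contradicting that both colors occur), so each side of $\Gamma$ has at least two vertices and $\Gamma$ is not a star; a non-star tree has two adjacent internal vertices, so $\Gamma$ has an edge $\{C,D\}$ with $C$ a red component, $D$ a blue component, both of $\Gamma$-degree $\geq 2$. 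Let $v$ be the vertex of $G$ corresponding to this $\Gamma$-edge. Removing $\{C,D\}$ from the tree $\Gamma$ splits it into two subtrees, each still containing at least one edge, which correspond to nonempty subsets $A$ and $B'$ partitioning $V\setminus\{v\}$; and there is no edge of $G$ between $A$ and $B'$, since such an edge would be monochromatic and would place a single (red or blue) component of $G$ into both subtrees of $\Gamma-\{C,D\}$ --- impossible. Hence $G-v$ is disconnected, so $v$ is a cut vertex and $G$ is articulated.

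The substance of the argument is entirely the bookkeeping with $\Gamma$ in the last paragraph; everything else is the cited equivalence plus elementary linear algebra. Equivalently, the dimension count shows that $(E_R,E_B)$ is a separation of the graphic matroid $M(G)$, so $M(G)$ is disconnected, and by Whitney's theorem the connected components of $M(G)$ are the blocks of $G$; a connected graph with at least two edges and more than one block has a cut vertex.
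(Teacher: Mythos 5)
Your proof is correct, but it takes a genuinely different route from the paper's. The paper argues by induction on $s=|E|$ using the addition--deletion machinery of Theorem~\ref{additiondeletion}: it deletes an edge $e$, deduces $r(G')=1$, and then splits into cases according to whether $G'$ is disconnected or articulated, resolving the delicate case (where $e$ joins the two halves away from the cut vertex) by passing to the contraction $G/e$ and invoking Theorem~\ref{additiondeletion}(2) to force $r(G)=2$, a contradiction. You instead take the equivalence ``$r=1$ iff reducible'' (which the paper itself invokes, in the other direction, in Lemma~\ref{bowtie}) as the starting point and translate reducibility of $\mathcal A(G)$ into a statement about the span of the edge forms; your dimension count $c(G_R)+c(G_B)=n+1$ and the auxiliary tree $\Gamma$ are exactly a proof that a separation of the graphic matroid yields a cut vertex, i.e.\ Whitney's theorem that $M(G)$ is connected iff $G$ is $2$-connected. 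I checked the details ($\Gamma$ connected with $n$ edges and $n+1$ vertices, hence a tree; neither color class spanning, hence $\Gamma$ not a star, hence an edge with both endpoints internal; the two subtrees inducing a partition of $V\setminus\{v\}$ with no crossing edge because a monochromatic edge cannot straddle the split) and they all hold. What each approach buys: the paper's proof stays inside the addition--deletion framework that is the theme of the article and needs no matroid input, at the cost of a multi-case induction; your proof is non-inductive, proves more (it identifies the irreducible factors of $\mathcal A(G)$ with the blocks of $G$, so it yields Lemma~\ref{bowtie} and its converse simultaneously), but rests entirely on the reducibility characterization of $r=1$, which in turn tacitly requires working with the essentialization of $\mathcal A(G)$ --- the same convention the paper already adopts, so this is not a gap, just something worth stating explicitly.
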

\begin{proof} We will prove the result by induction on $s\geq 2$. Of course, if $s=2$, then $G$ is a path of length 2, hence it is articulated.

Suppose $s\geq 3$. If $G$ has a leaf, then the neighbor of that leaf is an articulated vertex of $G$. Suppose $\delta(G)\geq 2$. Let $e\in E$, and consider $G'=G-e$. Then, $\delta(G')\geq 1$. By Theorem \ref{additiondeletion}, since the rank of $\mathcal A(G)$ is at least 2, we have $r(G')=1$.

$G'$ is disconnected or, by induction, $G'$ is articulated; i.e, $G'=G_1'\cup G_2'$, with $G_1'\cap G_2'=\emptyset$, or $G_1'\cap G_2'=\{v\}$.

In the first case, since $G$ is connected, then $e$ must be a bridge connecting $G_1'$ and $G_2'$. But then, any of the end vertices of $e$ is an articulated vertex for $G$.

In the second case, suppose $e$ connects the two halves $G_1'$ and $G_2'$ at vertices $w_1$ and $w_2$ respectively, and it is not incident to $v$ (for any other placement of $e$, $G$ is articulated at $v$).

If $G$ is the triangle $vw_1w_2$, then $G=K_3$, so $r(G)=2$. Contradiction. Otherwise, if one looks at $G'':=G/e$, we have: \begin{itemize}
	\item $G''$ is connected.
	\item $\mathrm{rank}(\mathcal{A}(G''))\geq 2.$
	\item $G''$ has less than or equal $s-1$ edges.
	\end{itemize}

If $G''$ is articulated, then $G$ is articulated. If $G''$ is not articulated, by induction, $r(G'')=2$. But then, by Theorem \ref{additiondeletion} (2), $r(G)=r(G')+1=2$; a contradiction.
\end{proof}

We end by mentioning that since $r(G)\leq 2$ for any graphic arrangement, it becomes more appealing to analyze ``the maximal degree of a Jacobian relation'' (so a maximal degree of a generating syzygy of the Jacobian ideal), and \cite[Corrolary 4.6]{Wa} finds an interesting lower bound of this invariant, in terms of the maximum number of new triangles that can be formed by adding an edge to $G$.

\vskip 0.3in

\noindent {\bf Acknowledgment.} Ricardo Burity thanks the Department of Mathematics of the University of Idaho (USA) for the hospitality during his stay. He is grateful as well to CAPES (MEC, Brazil) for funding the current Post-Doctoral scholarship at the University of Idaho.

We are very grateful to Wayne Ng Kwing King and Jean Vall\`{e}s for pointing to us the missing case (Vc) in Theorem \ref{cubic}. This type is derived from the remaining case $n_4=1$ in the proof of this theorem, that we originally did not analyze.

\bigskip

\renewcommand{\baselinestretch}{1.0}
\small\normalsize 

\bibliographystyle{amsalpha}

\end{document}